\documentclass[preprint,1p,times]{elsarticle}

\usepackage{amsbsy}
\usepackage{bm}
\usepackage{caption}
\usepackage{subcaption}
\usepackage[mathscr]{eucal}
\usepackage[all]{xy}
\usepackage{graphicx,color}
\usepackage[centertags]{amsmath}
\usepackage{amsfonts,amssymb,amsthm,mathrsfs, mathtools}
\usepackage{newlfont, verbatim}
\usepackage{hyperref}
\usepackage{geometry}
\usepackage{float}
\usepackage{setspace}
\usepackage{portland}
\usepackage{wrapfig}
\usepackage[linesnumbered, commentsnumbered]{algorithm2e}

\newtheorem{assumption}{Assumption}
\newtheorem{remark}{Remark}
\newtheorem{lemma}{Lemma}
\newtheorem{theorem}{Theorem}
\newtheorem{example}{Example}

\newcommand{\R}{\mathbb{R}}
\newcommand{\N}{\mathbb{N}}
\newcommand{\E}{\mathbb{E}}

\newcommand{\Q}{\QQQ}
\newcommand{\C}{\mathcal{C}}
\newcommand{\mix}{\mathrm{mix}}

\newcommand{\vecy}{{\vec\yyy}}
\newcommand{\yyy}{{y}}
\def\bx{{\mathbf x}}
\def\NNN{{N}}
\def\nnn{{n}}
\def\MMM{{M}}
\def\mmm{{m}}

\def\QQQ{{Q}}

\journal{Computers \& Mathematics with Applications}

\begin{document}

\begin{frontmatter}



\title{Multilevel Sparse Grid Methods for Elliptic Partial Differential Equations with Random Coefficients}

\author[rvt]{H.-W.~van Wyk\corref{cor1}}
\ead{hvanwyk@fsu.edu}




\address[rvt]{Florida State University, Department of Scientific Computing, 400 Dirac Science Library,Tallahassee, FL 32306-4120}

\begin{abstract}
\noindent Stochastic sampling methods are arguably the most direct and least intrusive means of incorporating parametric uncertainty into numerical simulations of partial differential equations with random inputs. However, to achieve an overall error that is within a desired tolerance, a large number of sample simulations may be required (to control the sampling error), each of which may need to be run at high levels of spatial fidelity (to control the spatial error). Multilevel sampling methods aim to achieve the same accuracy as traditional sampling methods, but at a reduced computational cost, through the use of a hierarchy of spatial discretization models. Multilevel algorithms coordinate the number of samples needed at each discretization level by minimizing the computational cost, subject to a given error tolerance. They can be applied to a variety of sampling schemes, exploit nesting when available, can be implemented in parallel and can be used to inform adaptive spatial refinement strategies. We extend the multilevel sampling algorithm to sparse grid stochastic collocation methods, discuss its numerical implementation and demonstrate its efficiency both theoretically and by means of numerical examples.
\end{abstract}

\begin{keyword}

uncertainty quantification \sep multilevel sampling \sep sparse grid sampling \sep elliptic partial differential equations



\end{keyword}

\end{frontmatter}

\section{Introduction}

Computing has become an invaluable tool in modern science and engineering because, increasingly, computer simulations are used to supplement or replace experiments and prototype engineering systems, and to predict the behavior of complex physical processes. Often, however, the precise environmental conditions (or model parameters) surrounding the process that is being simulated are known only with a limited degree of certainty. For systems governed by partial differential equations (PDEs) with random inputs, statistical sampling methods present arguably the most direct and least intrusive means of incorporating parametric uncertainty into numerical simulations. Descriptive statistics related to the random simulation output are obtained by generating representative samples of the input parameters and then running the numerical simulation for each sample point, yielding sample of outputs that can then be aggregated statistically.

\vspace{1em}

To be more specific, let $(\Omega,\mathcal F, \mathbb P)$ denote the complete probability space underlying the system's uncertain input parameters. For any sample point $\omega\in \Omega$ corresponding to a given system configuration, let $u(\bx,\omega)$ denote the resulting simulation output and let $G_1\big(u(\bx,\omega)\big)$ denote a physical output of interest (e.g., a function value, a spatial average, the total energy, or the flux across a boundary) that is determined from $u(\bx,\omega)$.\footnote{Of course, the simulation output could also depend on time, but for the sake of simplicitly, we suppress mention of such possible dependences.}   A large class of statistical quantities of interest $\QQQ$ associated with an output of interest $G_1\big(u(\bx,\omega)\big)$ take the form of a stochastic integral or expectation, i.e.,
\begin{equation}\label{eqn: intro statistical qoi 1}
  \QQQ := \E\big[G_2\big(G_1(u)\big)\big] = \int_\Omega
   G_2\Big(G_1\big(u(\bx,\omega)\big)\Big)\,d\mathbb{P}(\omega) 
\end{equation}
for an appropriate choice of $G_2$; for example, if $G_2(v)= v^k$, then $\QQQ$ is the $k^{th}$ raw statistical moment of $G_1(u)$ or, if $G_2(v) = \chi_{\{G_1(u)\geq a)\}}$, where $\chi$ is the characteristic function, then $\QQQ$ equals the exceedance probability $\mathbb{P}\big[G_1\big(u(\omega)\big) \geq a\big]$. Because this paper addresses the numerical approximation of the integral \eqref{eqn: intro statistical qoi 1}, it is not essential for us to know the details about how the integrand is constructed from the output of interest $G_1$ and the desired statistical information embodied in $G_2$. Thus, we can refer directly to the integrand by letting $G = G_2\circ G_1$ so that we rewrite \eqref{eqn: intro statistical qoi 1} as
\begin{equation}\label{eqn:intro statistical qoi 2}
\QQQ = \E[G(u)] = \int_\Omega G\big(u(\bx,\omega)\big) \, d\mathbb{P}(\omega).
\end{equation}

\vspace{1em}

In general, input functions that are modeled as spatially varying random fields are first approximated by functions of a finite-dimensional random parameter vector $\vecy(\omega):= \big(\yyy_1(\omega),\ldots, \yyy_N(\omega)\big)$ with range in some hyper-rectangle $\Gamma = \prod_{n=1}^N \Gamma_n\subset \R^N$ and known joint probability density function $\rho:\Gamma \rightarrow [0,\infty)$. Such ``finite noise'' approximations may be achieved through an expansion in terms of piecewise constant functions based on a subdivision of the spatial domain, or through truncated spectral expansions related to the field's correlation function, e.g., via Karhunen-Lo\`eve expansions; see \cite{Karhunen1947, Loeve1978, Schwab2006}. Under this approximation, the statistical quantity of interest $\QQQ$ given by \eqref{eqn:intro statistical qoi 2} takes the form of a high-dimensional integral, i.e.,
\begin{equation}\label{eqn:intro_stat_qoi_3}
\QQQ =  \E[G(u)] = \int_\Gamma G\big(u(\bx,\vecy)\big) \rho(\vecy)\,d\vecy ,
\end{equation}
where $\vecy$ denotes the vector of random parameters.

\vspace{1em}

In practice, for any $\vecy\in\Gamma$, only spatial approximations $u_h(\bx,\vecy)$ (determined via, e.g., finite element, finite difference, finite volume, or spectral methods) of the solution $u(\bx,\vecy)$ are available. Here $h$ is a spatial discretization parameter that is often related to the spatial grid size. As a result, instead of \eqref{eqn:intro_stat_qoi_3}, one can only determine the approximation
\begin{equation}\label{eqn:intro_stat_qoi_3a}
\QQQ\approx\QQQ_h :=   \E[G(u_h)]=  \int_\Gamma G\big(u_h(\bx,\vecy)\big) \rho(\vecy)\,d\vecy 
\end{equation}
of the quantity of interest $\QQQ$.

\vspace{1em}

A statistical sampling method is simply a numerical quadrature scheme that estimates the statistical quantity of interest given by \eqref{eqn:intro_stat_qoi_3} or \eqref{eqn:intro_stat_qoi_3a} by a quadrature rule, e.g., in the latter case, a weighted sum of the form
\begin{equation}\label{eqn:qoi_full_approx}
\QQQ\approx \QQQ_{\MMM,h} :=  \sum_{\mmm=1}^{\MMM} \mu_\mmm G\big( u_h(\bx,\vecy^\mmm)\big),
\end{equation}
where $\{\yyy^{(\mmm)}\}_{\mmm=1}^\MMM$ denotes a collection of samples of $\vecy\in\Gamma$ and $\{\mu_\mmm\}_{\nnn=1}^\NNN$ a given set of weights. Note the evaluation of $ \QQQ_{\MMM,h}$ requires $\MMM$ solutions $\{u_h(\bx,\vecy^\mmm)\}_{\mmm=1}^\MMM$ of the discretized PDE, one for each of the $\MMM$ samples $\vecy^\mmm$ of the parameter vector $\vecy$. Depending on the statistical complexity of the underlying parametric uncertainty and on the sampling scheme used, an accurate approximation $\QQQ_{\MMM,h}$ of $\QQQ$ may require a large number of simulation runs, i.e., $\MMM$ may be large; clearly, this can be computationally intensive, especially when individual simulations are run at a high level of spatial fidelity, i.e., for small $h$. Increasing $\NNN$, i.e., increasing the dimension of the parameter space, especially results in explosive growth in computational complexity, a phenomena commonly referred as the {\em curse of dimensionality.}

\vspace{1em}

Monte Carlo (MC) sampling provides a straightforward means of approximating the integral in \eqref{eqn:intro_stat_qoi_3} by generating $\MMM$ random samples $\vecy^\mmm\in\Gamma$, $\mmm=1,\ldots,\MMM$, based on the PDF $\rho(\vecy)$ and then simply averaging the resulting $G\big(u_h(\bx,\vecy^\mmm)\big)$. Thus, $\mu_\mmm=1/\MMM$ for all $\mmm$ and \eqref{eqn:qoi_full_approx} becomes
\begin{equation}\label{mcm}
\QQQ\approx \QQQ_{\MMM,h}^{\mathrm{MC}} = 
 \frac{1}{\MMM} \sum_{\mmm=1}^\MMM G\big(u_h(\bx,\vecy^\mmm)\big).
\end{equation}

Although the MC method is largely immune from the curse or dimensionality, its suffers from very slow convergence with respect to increasing $\MMM$. In fact, the rate at which the root mean squared error converges is $O(M^{-1/2})$. This has motivated the development of multilevel Monte Carlo (MLMC) methods. These methods aim to achieve the same accuracy as traditional MC methods but at a reduced computational cost by making use of a hierarchy of spatial simulation models having increasing fidelity, e.g., based on decreasing values of $h$. The MC method as described by \eqref{mcm} uses a single spatial model, i.e., a single value of $h$. MLMC methods were first introduced in \cite{Heinrich2001} for the evaluation of parametric integrals, especially those arising from the approximation of integral equations. In \cite{Giles2008, Giles2008a, Giles2009}, the algorithm is further developed, extending its application to numerical simulations of stochastic differential equations related to computational finance. In \cite{Barth2010}, a version of the method was adapted to finite element approximations of elliptic partial differential equations with stochastic inputs. There, the sample sizes were chosen to equilibrate the sampling and spatial discretization errors at each refinement level, resulting in approximations of $\QQQ$ that, in certain cases, are of log-linear complexity. This approach was generalized to include a variety of other stochastic sampling schemes in \cite{Harbrecht2013}, where its behavior was explained through analogies with sparse-grid methods \cite{Bungartz2004}.

\vspace{1em}

In \cite{Cliffe2011}, an altogether more conceptual view was taken by examining the MLMC method as a numerical optimization problem. The number of parameter samples needed at each spatial discretization level are coordinated so as to minimize the total computational cost, subject to a given error tolerance. Simulations based on smaller values of $h$ are sampled sparingly, whereas those based on coarser grids form the bulk of the sampling, where possible. This framework lends a certain degree of flexibility to the MLMC method by allowing for the incorporation of different spatial error estimates and statistical quantities of interest \cite{Teckentrup2012,Charrier2011} as well as other factors that may influence the convergence rate such as the truncation level of the Karhunen-Lo\`eve expansion, parallel implementations, and quadrature nesting. 

\vspace{1em}

An alternative to sampling methods such as MC or quasi-MC methods for approximating the quantity of interest $Q$ are provided by interpolatory methods which are often referred to as {\em stochastic collocation} (SC) methods. In this setting, the parameter dependence of the spatial approximation $u_h(\bx,\vecy)$ is itself approximated in a finite dimensional space $V_\MMM(\Gamma)$ which is spanned by a set of interpolatory basis functions $\{\psi_\mmm\}_{\mmm=1}^{\MMM}$ that correspond to a predetermined, i.e., deterministic, set of sample points $\{\vecy^\mmm\}_{\mmm=1}^{\MMM}$ in $\Gamma$. The basis usually consists of global fundamental Lagrange interpolating polynomials \cite{Babuska2007, Barthelmann2000, Nobile2008, Nobile2008a}. Then, in this case, the full approximation of $u(\bx,\vecy)$ with respect to both the spatial variable $\bx$ and parameter vector $\vecy$ takes the form of the interpolant
$$
u(\bx,\vecy)\approx {\mathcal I}_{\MMM} u_h(\bx,\vecy)  :=\sum_{\mmm=1}^\MMM u_h(\bx,\vecy^\mmm)\psi_{\mmm}(\vecy) \in V_\MMM(\Gamma)\otimes W_h(D) ,
$$
where $W_h(D)$ denotes, e.g., the finite-dimensional finite element space used for spatial approximation and $u_h(\bx,\vecy^\mmm)$ denotes the solution of the discretized PDE for the sample parameter vector $\vecy^m$. Here, we approximate the quantity of interest $\QQQ$ given in \eqref{eqn:intro_stat_qoi_3} by the quantity
\begin{equation}\label{eqn:q_estimate_sc1}
 \QQQ\approx \QQQ_{\MMM,h}^{\mathrm{SC}} := \int_\Gamma G({\mathcal I}_{\MMM} u_h) \rho(\vecy)\,d\vecy.
\end{equation}
In practice, this integral has to be further approximated. If the mapping $G({\mathcal I}_{\MMM} u_h):\Gamma \rightarrow \widetilde W(D)$ is sufficiently smooth, one can use an interpolatory quadrature rule for which the quadrature points $\{\vecy^\mmm\}_{\mmm=1}^{\MMM}$ and Lagrange fundamental polynomial basis functions $\{\psi_\mmm\}_{\mmm=1}^{\MMM}$ are the same as those used to define the interpolant ${\mathcal I}_{\MMM} u_h$. If $\{\mu_\mmm\}_{\mmm=1}^\MMM$ denotes the corresponding quadrature weights, we then have from \eqref{eqn:q_estimate_sc1} that
$$
\QQQ_{\MMM,h}^{\mathrm{SC}} \approx \sum_{\mmm=1}^{\MMM} \mu_\mmm G\big( {\mathcal I}_{\MMM} u_h(\bx,\vecy^m)\big)
=\sum_{\mmm=1}^{\MMM} \mu_{\mmm} G\Big( \sum_{\mmm'=1}^\MMM u_h(\bx,\vecy^{\mmm'})\psi_{\mmm'}(\vecy^m)\Big)= \sum_{\mmm=1}^{\MMM} \mu_\mmm G\big( u_h(\bx,\vecy^\mmm)\big),
$$
since the Lagrange fundamental polynomials satisfy $\psi_{\mmm'}(\vecy^m)=\delta_{\mmm\mmm'}$. In general, the numerical approximation of the integral in \eqref{eqn:q_estimate_sc1} can also be achieved using a different quadrature rule. The overall computational cost of this rule, however, is negligible compared to the cost of constructing the interpolant $\mathcal I_{\MMM}u_h$.

\vspace{1em}

Thus, comparing with \eqref{eqn:qoi_full_approx}, we see that SC methods for approximating the quantity of interest are sampling methods much in the same vein as are MC methods. For the former, the sample points $\{\vecy^\mmm\}_{\mmm=1}^{\MMM}$ and weights $\{\mu_\mmm\}_{\mmm=1}^\MMM$ in \eqref{eqn:qoi_full_approx} are chosen from an interpolatory quadrature rule whereas for the latter, they are chosen at random and with weights $1/\MMM$ for all $\mmm$. For both, the total computational effort is dominated by the computation of solutions of the discretized PDE at the sample points $\vecy^\mmm$.\footnote{Instead of the Lagrange fundamental polynomials, one can choose other bases such as those composed of piecewise polynomial splines \cite{Ma2009}.}


\vspace{1em}

In this paper, in the same way as for MLMC methods \cite{Cliffe2011,Teckentrup2012,Charrier2011}, we consider reducing the cost of determining approximations of quantities using a hierarchy of spatial grids but, instead of using MC approximations with respect to the random parameters $\vecy$, we use {\em sparse-grid stochastic collocation} methods \cite{Babuska2007, Barthelmann2000, Nobile2008, Nobile2008a}. These sampling methods, based on nodal interpolation at sparse-grid points in $\Gamma$, have been shown to yield considerably higher rates of convergence than Monte Carlo methods for integrands $G\big((u(\vecy)\big)$ that depend smoothly on the random vector $\vecy\in \Gamma$ and for a moderately high parameter dimension $\NNN$. Thus, our goal is to use a hierarchy of spatial grids to accelerate the convergence of stochastic collocation approximations $\QQQ_{\MMM,h}^\mathrm{SC}$ defined in\eqref{eqn:q_estimate_sc1}, i.e., we want to do for stochastic collocation methods what MLMC methods do for MC methods.
 


\vspace{1em}

In Section \ref{section:notation}, we establish the notation and describe the problem setting used throughout the paper. In Section \ref{section:efficiency_sampling}, the $\varepsilon$-cost for sparse grid stochastic collocation methods, a measure of the efficiency of a sampling scheme, is discussed as is its computation based on \emph{a priori} error estimates. We introduce multilevel methods in Section \ref{section:multilevel} and derive formulae for the optimal sample size at each spatial discretization level from the error estimates given in Section \ref{section:efficiency_sampling}. We also derive a theoretical bound on the $\varepsilon$-cost that improves upon that of traditional collocation methods. Here it is necessary to distinguish between collocation methods with sampling errors with algebraic convergence, i.e., of order $O(\MMM^{-\mu_2})$, and those with sub-algebraic convergence, i.e., of order $O\left(\MMM^{-\mu_2}\log(\MMM)^{\mu_1}\right)$. Current practice in multilevel algorithms is to choose the hierarchy of spatial discretizations based on a fixed, predetermined mesh refinement strategy. Numerical examples are provided in Section \ref{section:numerical_examples} to complement and illustrate the theoretical results.

\section{Notation and Setting}
\label{section:notation}

In this section, we introduce notation, establish estimates for the approximation error in \eqref{eqn:q_estimate_sc1}, and make assumptions that allow us to  analyze the multilevel sparse grid method. Although the multilevel framework is applicable to a variety of physical models, we use the elliptic partial differential equation throughout as an illustrative example. Not only is it the most well-understood model problem in the context of sparse grid stochastic collocation methods, but it has also been used extensively as an application for multilevel Monte Carlo methods, thus serving as a useful basis for comparison. In sequel, let $D\subset \R^d, d=1,2,3$ be a convex polyhedron, or have $C^2$ boundary $\partial D$. We denote by $L_\rho^q(\Gamma;W(D))$, $1 \leq q \leq \infty$, the space of $q$-integrable $W(D)$-valued functions on $\Gamma$. The stationary elliptic equation with homogenous Dirichlet boundary conditions, in which both the conductivity coefficient $a$ and the forcing term $f$ are finite noise random fields can be written as a parameterized family of deterministic equations 
\begin{equation}\label{eqn:elliptic_parametric_y}
\begin{split}
\nabla\cdot( a(x,\vecy)\nabla u(x,\vecy) & = f(x,\vecy) \ \ \text{in } D\times \Gamma\\
u(x,\vecy) &= 0 \ \ \ \ \ \ \ \ \text{on } \partial D\times \Gamma ,
\end{split}
\end{equation}
with corresponding weak form: find $u:\Gamma \rightarrow H^1_0(D)$ so that 
\begin{equation}\label{eqn:elliptic_weak_y}
\int_D a(\vecy) \nabla u \cdot \nabla w \;dx = \int_D f(\vecy) w \; dx \ \ \ \forall w\in H^1_0(D), y\in \Gamma.
\end{equation} 
Under the assumption that $f\in L_\rho^\infty(\Gamma;L^2(D))$ and $a\in L^\infty(\Gamma,C^1(\bar D))$ so that
\[
0<a_{\min} \leq a(x,\vecy) \ \ \text{a.s.\! on } \Gamma\times D
\]
for constant $a_{\min}>0$, the solution to \eqref{eqn:elliptic_weak_y} exists, is unique and has sample paths $u(\vecy)\in H^1_0(D)\cap H^2(D)$. In fact, there exists a constant $C_\mathrm{reg}>0$ independent of $\vecy$ so that $\|u(\vecy)\|_{H^2}\leq C_\mathrm{reg}\|f(\vecy)\|_{L^2}$ for all $\vecy\in \Gamma$ and hence $u\in L^\infty_\rho(\Gamma,H^1_0(D)\cap H^2(D))$. 

\vspace{1em}

Our goal is to derive an estimate for $\|\QQQ - \QQQ_{\MMM,h}\|_{\widetilde W}$. It is convenient to use the linearity of the expectation, together with the triangle inequality to split the total error into a spatial discretization error and a sampling error, i.e. 
\begin{equation}\label{eqn: split generic error}
\|\QQQ - \QQQ_{M,h}^{\mathrm{SC}}\|_{\widetilde W} \leq \underbrace{\left\|\QQQ - \QQQ_h\right\|_{\widetilde W}}_{spatial\ error} + \underbrace{\left\|\QQQ_{h} - \QQQ_{\MMM,h}^{SC}]\right\|_{\widetilde W}}_{sampling\ error},
\end{equation}
where $\|\cdot\|_{\widetilde W}$ is the norm on $\widetilde W(D)$. Here, the spatial discretization error is independent of the sampling error and can thus be considered separately. 
\subsection{Spatial discretization error}

We estimate the first term of the right-hand side of \eqref{eqn: split generic error}. With regards to the output of interest $G(u)$, we make the following assumptions.  

\begin{assumption}\label{ass:G regularity}
\mbox{(i)}
{\em  For each $\vecy\in\Gamma$, $u(\bx,\vecy)\in W(D)$ and $G\big(u(\bx,\vecy)\big)\in \widetilde W(D)$ for appropriate function spaces $W(D)$ and $\widetilde W(D)$. For second-order elliptic problems, often $W(D)=H^1(D)$ or a subspace of that Sobolev space; if $G(u)$ is a functional, then $\widetilde W(D)={\mathbb R}$.}

\mbox{(ii)}
{\em For all $u_1(\bx,\vecy),u_2(\bx,\vecy)\in W(D)$ and $\vecy\in\Gamma$, the mapping $G:W(D)\rightarrow \widetilde W(D)$ satisfies the Lipschitz condition
\begin{equation}\label{assum1}
\big\|G\big(u_1(\cdot,\vecy)\big)-G(u_2(\cdot,\vecy)\big)\big\|_{\widetilde W} \leq C_G(\vecy)\big\|u_1(\cdot,\vecy)-u_2(\cdot,\vecy)\big\|_{W}, 
\end{equation}
where the Lipschitz constant $C_G(\vecy)\in L_\rho^1(\Gamma)$.}
\hfill$\Box$
\end{assumption} 

The regularity assumption \eqref{assum1} together with the Jensen and H\"older inequalities yield that \begin{equation}\label{errest2}
\begin{aligned}
\|\QQQ - \QQQ_{h}\|_{\widetilde W} &= \big\|\E[G(u)-G(u_h)]\big\|_{\widetilde W} \leq \E[\|G(u)-G(u_h)\|_{\widetilde W}] \\ 
&\leq \E\left[C_G\|u-u_h\|_{W}\right] \leq \|C_{G}\|_{L_\rho^1(\Gamma)} \|u-u_h\|_{L_\rho^\infty(\Gamma,W)}. 
\end{aligned}
\end{equation}
The spatial error $\|u-u_h\|_{L^\infty(\Gamma,W)}$ can often be approximated by means of traditional finite element analyses; see, e.g., \cite{Brenner2007}. For second-order elliptic PDEs with homogeneous Dirichlet boundary conditions, under standard assumptions on the spatial domain $D$ and the data, one can choose $W(D)=H^s(D)$, $s=0$ or $1$, i.e., we can measure the error in either the $H^1(D)$ or $H^0(D)=L^2(D)$ norms. One can then construct $u_h(\cdot,\vecy)\in V_h(D)\subset H^1_0(D)$, where $V_h(D)$ denotes a standard finite element space of continuous piecewise polynomials of degree at most $r$ based on a regular triangulation $\mathcal T_h$ of the spatial domain $D$ with maximum mesh spacing parameter $h:=\max_{\tau\in \mathcal T_h}\textrm{diam}(\tau)$. We then have the error estimate \cite{Brenner2007}
\begin{equation}\label{eqn:fem_error_ptwise}
\|u(\cdot,\vecy)-u_h(\cdot,\vecy)\|_{H^s(D)}\leq ch^{r+1-s}\|u(\cdot,\vecy)\|_{H^{r+1}(D)} \ \ \text{for $s=0,1$ and for a.e. $\vecy\in \Gamma$},
\end{equation}
where $c > 0$ is independent of $\vecy$ and $h$. Hence,
\begin{equation}\label{eqn:fem_error_linf}
\|u-u_h\|_{L^\infty(\Gamma,H^s(D))}\leq c h^{r+1-s}\|u\|_{L^\infty(\Gamma,H^{r+1}(D))} \ \ \text{for $s=0,1$}.
\end{equation}
For finite element error estimates under less rigid conditions, see, e.g., \cite{Teckentrup2012, Grisvard1985}. Combining \eqref{errest2} and \eqref{eqn:fem_error_ptwise} yields
\begin{equation}\label{errest3}
 \big\|\E[G(u)-G(u_h)]\big\|_{\widetilde W} 
\leq c h^{r+1-s}\|C_{G}\|_{L_\rho^1(\Gamma)} \|u\|_{L^\infty(\Gamma,H^{r+1}(D))}. 
\end{equation}

\subsection{Sampling Error}

In light of Assumption \ref{ass:G regularity}, the sampling error in \eqref{eqn: split generic error} can be bounded as follows

\begin{align*}
\|\QQQ_h - \QQQ_{M,h}^{\mathrm{SC}}\|_{\widetilde W} &= \left\|\E\big[G(u_h) - G\big(\mathcal{I}_{\MMM}u_h\big)\big]\right\|_{\widetilde W} \leq \E\big[\|G(u_h)-G\big(\mathcal I_{\MMM}u_h\big)\|_{\widetilde W}\big]\\
&\leq \|C_G\|_{L^1_\rho} \|u_h - \mathcal I_{\MMM}u_h\|_{L^\infty(\Gamma,W)}
\end{align*}

It therefore suffices to consider only the error of interpolating finite element solutions $u_h$ in the stochastic variable $\vecy\in \Gamma$. In the following, we briefly outline the construction of sparse grid interpolants and elaborate on the resulting interpolation error estimates that we will make use of in the following sections. 

\vspace{1em}

Most $N$-dimensional interpolants are constructed through some combination of lower dimensional interpolants. For each component $\Gamma_n\subset \R$ of $\Gamma$, let
\[
V_{i_n}(\Gamma;W(D))=\left\{\sum_{j=1}^{m_{i_n}}c_j\psi_n^{j}: c_j \in W(D) \ \text{for } j=1,...,m_{i_n}\right\},
\]
where $\psi_n^1,...,\psi_n^{m_{i_n}}$ is a set of one-dimensional nodal basis functions with interpolation level $i_n$ and based on $m_{i_n}$ nodal points $y_n^1,...,y_n^{m_{i_n}}$. Furthermore, define $\mathscr U^{i_n}:C^0(\Gamma_n;W(D))\rightarrow V_{i_n}(\Gamma_n;W(D))$ to be the one-dimensional interpolation operator on $\Gamma_n$, so that for any one-dimensional function $u$ and any point $y_n \in \Gamma_n$, 
\[
\mathscr U^{i_n}(u)(y_n) = \sum_{j=1}^{m_{i_n}}u(y_n^j) \psi_{n}^{j}(y_n). 
\] 
The full tensor product interpolant of level $\nu$ approximates an $N$-dimensional function $u:\Gamma\rightarrow W(D)$ by the product of one-dimensional interpolants, each with interpolation level $i_n=\nu$, i.e.
\begin{equation}\label{eqn:full_tensor_prod}
u(\vecy) \approx \mathscr U^{\nu}\otimes \cdots \otimes \mathscr U^{\nu}(u)(\vecy) := \sum_{j_1=1}^\nu \cdots \sum_{j_N=1}^\nu u(y_1^{j_1},...,y_N^{j_N}) \prod_{n=1}^N \psi_{n}^{j_n}(y_n). 
\end{equation}
Computing this interpolant requires the evaluation of $v$ at $\MMM=\prod_{n=1}^{N}m_{i_n} = (m_\nu)^N$ sample points, leading to a prohibitively high cost at high values of $N$, especially if each function evaluation involves a PDE solve. 

\vspace{1em}

The isotropic Smolyak formula \cite{Smolyak1963} constructs a multi-dimensional interpolant $\mathcal I_{\MMM}u$ on $\Gamma$ from univariate interpolants, based on a greatly reduced set of sample points $\{\vecy^1,...,\vecy^{\MMM}\}$ while maintaining an overall accuracy not much lower than that of the full tensor product rule (see \cite{DeVore1998,Bungartz2004}). For any multi-index $i=(i_1,...,i_N)\in \N_+^N$, take $i\geq 1$ to mean $i_n\geq 1$ for $n=1,...,N$ and let $|i|:=i_1+...+i_N$. Also for any coordinate $y_n$ of $\vecy\in \Gamma$, we write $\vecy=(\vecy_n,y_n^*)$, where $y_n^*\in \prod_{\substack{n'=1\\n'\neq n}}^N \Gamma_{n'}$ are the remaining coordinates.  While not computed as such, the Smolyak interpolation operator $\mathcal I_{\MMM}$ of level $\nu$ can be written as the linear combination of tensor product rules
\[
\mathcal I_{\MMM} = \sum_{\substack{\nu-N+1\leq |i-1| \leq \nu\\ i\geq 1 }} (-1)^{\nu+N-|i|} {N-1\choose \nu + N -|i|}\mathscr U^{i_1}\otimes\cdots \otimes \mathscr U^{i_n}.
\] 
In the following, we restrict our attention to bounded hyper-rectangles $\Gamma$, assuming without loss of generality that $\Gamma=[-1,1]^N$, and consider the isotropic Smolyak formula based on one-dimensional Clenshaw-Curtis nodes 
\[
y_n^j = - \cos\left(\frac{\pi (j-1)}{m_{i_n}-1}\right),\ \text{for } j = 1,2,...,m_{i_n},
\] 
with $m_{i_n}$ chosen so that
\[
m_{i_n} = \left\{ \begin{array}{ll} 1, & \text{ if } i_n = 1 \\ 2^{i_n-1}+1, & \text{ if } i_n>1 \end{array} \right.
\]
to ensure nestedness. Extensions of the Smolyak formula to unbounded domains with non-nested Gaussian abscissas can be found in \cite{Nobile2008}, while \cite{Nobile2008a} discusses anisotropic Smolyak formulae in which coordinate directions can be weighted differently, according to their relative importance.

\vspace{1em}

For the purposes of error estimation for sparse grid methods, the integrand $u_h$ is often required to have bounded mixed derivatives of order $k\in\N_0$, i.e. to belong to the space%
\begin{equation*}
C_\mix^k(\Gamma,W(D)) = \left\{w:\Gamma\rightarrow W(D): \|w\|_{\mix,k}: =\max_{y\in\Gamma, s\leq k}\|D^s w(y)\|_{W} < \infty\right\},
\end{equation*} 
where $s=(s_1,...,s_N)$ is a multi-index in $\N_{0,+}^N$. 

\vspace{1em}

Conditions on the smoothness of the model output $u_h$ in $\vecy\in\Gamma$ depend on the underlying physical model and can often be related to the smoothness of the model's input parameters. For the elliptic problem \eqref{eqn:elliptic_parametric_y}, it was shown in \cite{Babuska2007} (Lemma 3.2) that if
\[ 
\|\partial^l_{y_n}a(y)\|_{L^\infty}\leq \theta_n, \ \ \|\partial^l_{y_n}f(y)\|_{L^2} \leq \theta_n, \ \ \text{a.e. on } \Gamma, \ \text{for all } l = 1,2,...,k \ \text{ and all } n = 1,\cdots, N, 
\]
where $0<\theta_n<\infty$ is independent of $\vecy = (\vecy_n,y_n^*)\in \Gamma$, then $u_h\in C_{\mix}^k(\Gamma,H^1(D))$.
The above condition is readily satisfied by standard finite noise approximations of the coefficients. In \cite{Barthelmann2000} (and later in \cite{Nobile2008}) it was shown that for functions in $C_{\mix}^k$, the interpolation error for the isotropic Smolyak approximation based on global Lagrange polynomials has upper bound of the form
\begin{equation}\label{eqn:sample_error_lagrange_mixed}
\|u-\mathcal I_{\MMM} u\|_{C^0(\Gamma,W)}\leq c \MMM^{-k}\log(\MMM)^{(k+2)(N-1)+1} \|u\|_{\mix,k}.
\end{equation}
The works \cite{Bungartz2004,Ma2009} make use of piecewise linear nodal basis functions with local support to interpolate functions with limited smoothness, obtaining an estimate on the sampling error for functions in $C_{\mix}^2(\Gamma;W(D))$ of the form, 
\begin{equation}\label{eqn:sample_error_pw_linear}
\|u-\mathcal I_{\MMM}u\|_{C^0(\Gamma,W)}\leq c \MMM^{-2}\log(\MMM)^{3(N-1)} \|u\|_{\mix,2}.
\end{equation}
The hierarchical construction of the piecewise linear sparse grid interpolant also lends itself well to adaptive refinement through the use the hierarchical surplus as an indicator of discontinuity. This approach has been extended to constructions using wavelets (see \cite{Gunzburger2013}).

\vspace{1em}

The convergence rate in \eqref{eqn:sample_error_lagrange_mixed} was improved in \cite{Nobile2008} to an algebraic rate for integrands within a special class of functions $C_{\mix}^\infty(\Gamma,W(D))$ that have analytic extension in each direction. In particular, $u\in C^0(\Gamma,W(D))$ is a member of $C_{\mix}^{\infty}(\Gamma;W(D))$ if for every $y = (y_n,y_n^*)\in \Gamma, n=1,...,N$, the function $u(y_n,y_n^*,x)$ as a univariate function of $y_n$, i.e. $u:\Gamma_n \rightarrow C^0(\Gamma_n^*,W(D))$, admits an analytic extension $u(z), z\in \mathbb C$ in the complex region
\[
\Sigma(\Gamma_n;\tau_n):\{z\in \mathbb C: \mathrm{dist}(z,\Gamma_n)\leq \tau_n\},
\]
so that 
\[
|u|^{(n)}_{\mix,\infty}:=\max_{z\in \Sigma(\Gamma_n;\tau_n)} \|u(z)\|_{C^0(\Gamma_n^*;W)} < \infty.
\]
Let 
\[
\|u\|_{\mix,\infty}:= \max_{n=1,...,N} |u|^{(n)}_{\mix,\infty}.
\]
For the elliptic equation \eqref{eqn:elliptic_parametric_y}, the following mild assumption on coefficients $a$ and $f$ guarantees that $u_h\in C_{\mix}^\infty(\Gamma,H^1(D))$ (see \cite{Babuska2007}, Lemma 3.2).%
\begin{assumption}\label{ass:coefficient_regularity}
Assume that for every $y=(y_n,y_n^*)\in \Gamma$, there is a constant  $\theta_n<\infty$ so that%
\begin{equation}\label{eqn:coefficient_regularity}
\left\|\frac{\partial_{y_n}^k a(y)}{a(y)}\right\|_{L^\infty}\leq \theta_n^k k! \ \ \ \text{ and } \ \ \ \frac{\|\partial_{y_n}^k f(y)\|_{L^2}}{1+\|f(y)\|_{L^2}}\leq \theta_n^k k!,
\end{equation}
for all $k\in \N_0^+$.
\end{assumption}
Although the sampling error estimates derived in \cite{Nobile2008} depend on the norms $|u|_{\mix,\infty}^{(n)}$, where $n=1,...,N$, these were subsumed into a scaling constant. For our purposes, however, it is necessary for them to appear explicitly in the error estimate. The following lemma therefore indicates how the derivations in \cite{Nobile2008} can be modified to achieve this. 
\begin{lemma}
Let $\mathscr A(\nu,N)u$ be the Smolyak interpolant of the function $u$ contained in $C_{\mix}^{\infty}(\Gamma,W(D))$, based on Clenshaw-Curtis abscissas and Lagrange polynomials. The interpolation error then satisfies
\begin{equation}\label{eqn:sampling_error_analytic}
\|u-\mathcal I_{\MMM} u\|_{C^0(\Gamma,W)}\leq c\MMM^{-\mu_2}\max\{\|u\|_{\mix,\infty},\|u\|_{\mix,\infty}^{N}\}, 
\end{equation}
for constants $c \geq 1$ and $\mu_2 > 0$.
\end{lemma}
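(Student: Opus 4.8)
The plan is to retrace the analytic-case analysis of \cite{Nobile2008}, keeping the one-dimensional analyticity quantities $|u|^{(n)}_{\mix,\infty}$ visible at every stage instead of absorbing them into a single scaling constant, and then to observe that the constant so produced is a polynomial of degree at most $N$ in $\|u\|_{\mix,\infty}$ --- which is exactly what the factor $\max\{\|u\|_{\mix,\infty},\|u\|_{\mix,\infty}^{N}\}$ encodes.

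First I would record the one-dimensional ingredient in Banach-space-valued form: for any Banach space $X$ and any $v:\Gamma_n\to X$ admitting an analytic extension to $\Sigma(\Gamma_n;\tau_n)$, the level-$i$ Clenshaw--Curtis Lagrange interpolant satisfies
$$\|v-\mathscr U^{i}v\|_{C^0(\Gamma_n;X)}\;\le\;C_1\,\theta_n^{-(m_i-1)}\max_{z\in\Sigma(\Gamma_n;\tau_n)}\|v(z)\|_X ,$$
with $\theta_n>1$ and $C_1\ge1$ depending only on $\tau_n$; this is the one-dimensional analytic interpolation bound underlying the analysis of \cite{Barthelmann2000,Nobile2008}, written with the analyticity factor left explicit. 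I would also isolate two elementary facts used repeatedly: (a) $\mathscr U^{i}$ and the difference operator $\Delta^{i}:=\mathscr U^{i}-\mathscr U^{i-1}$ (with $\mathscr U^{0}:=0$) are bounded on $C^0(\Gamma_n;X)$ with norm at most $1+\Lambda_{m_i}$, where $\Lambda_{m_i}\lesssim\log m_i$ is the Clenshaw--Curtis Lebesgue constant; and (b) since $\Delta^{i}$ acts on a single coordinate through evaluations at real nodes, it sends a function that is analytic in any other coordinate to a function again analytic in that coordinate, with the associated quantity $|\cdot|^{(m)}_{\mix,\infty}$ inflated by at most $1+\Lambda_{m_i}$.

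Next I would use the telescoping identity $\mathcal I_{\MMM}=\mathscr A(\nu,N)=\sum_{\mathbf i\ge\mathbf 1,\,|\mathbf i|\le\nu+N}\bigotimes_{n=1}^{N}\Delta^{i_n}$, whence
$$u-\mathcal I_{\MMM}u\;=\;\sum_{\substack{\mathbf i\ge\mathbf 1\\ |\mathbf i|>\nu+N}}\bigotimes_{n=1}^{N}\Delta^{i_n}u ,$$
valid for $u\in C_{\mix}^{\infty}(\Gamma,W)$ because the univariate interpolants converge there, and bound each summand $\big\|\bigotimes_{n}\Delta^{i_n}u\big\|_{C^0(\Gamma;W)}$ by applying the operators one coordinate at a time. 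In each coordinate one may either spend the geometric factor $\theta_n^{-(m_{i_n}-1)}$ from the analytic estimate, at the cost of extracting one power of $|u|^{(n)}_{\mix,\infty}$, or merely use the operator bound $1+\Lambda_{m_{i_n}}$; writing $\Delta^{i_n}=R^{i_n-1}-R^{i_n}$ in terms of the residuals $R^{i}:=I-\mathscr U^{i}$, grouping the resulting $2^{N}$ tensor products and invoking fact (b) repeatedly, one obtains for each $\mathbf i$ a bound of the form
$$\Big\|\textstyle\bigotimes_{n}\Delta^{i_n}u\Big\|_{C^0(\Gamma;W)}\;\le\;\Big(\textstyle\prod_{n}(1+\Lambda_{m_{i_n}})\Big)\,\theta^{-c_0\,|\mathbf i|/N}\,P\!\left(\|u\|_{\mix,\infty}\right),$$
where $\theta:=\min_n\theta_n>1$, $c_0>0$, and $P$ is a polynomial with nonnegative coefficients, degree at most $N$, and $P(0)=0$; its top-degree term is the diagonal grouping in which a power of a one-directional analyticity quantity is harvested in every coordinate, while the lower-order terms come from those groupings in which fewer coordinates contribute. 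Summing over $\{\mathbf i\ge\mathbf 1:|\mathbf i|>\nu+N\}$, the geometric factor dominates both the logarithmic Lebesgue factors and the polynomial count $\#\{\mathbf i:|\mathbf i|=m\}\sim m^{N-1}$, so the series converges and its tail is at most $c_2\,\theta^{-c_3\nu}$ with $c_2,c_3>0$ depending only on $N$ and the $\tau_n$. Finally, using the Clenshaw--Curtis sparse-grid cardinality relation $\MMM=\MMM(\nu,N)$ to convert $\theta^{-c_3\nu}$ into $c_4\MMM^{-\mu_2}$ for a suitable $\mu_2=\mu_2(\{\tau_n\},N)>0$, together with the elementary inequality $P(t)\le c_5\max\{t,t^{N}\}$ for $t\ge0$, collects everything into \eqref{eqn:sampling_error_analytic} with a single constant $c\ge1$.

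The step I expect to be the main obstacle is the third one: propagating the quantities $|u|^{(n)}_{\mix,\infty}$ through the $N$-fold tensor telescoping without generating factors that are unbounded in the interpolation level. The delicate point is precisely fact (b) --- that a difference operator in one coordinate preserves analyticity in every \emph{other} coordinate at the price of only a Lebesgue-constant factor --- combined with the bookkeeping that keeps the geometric decay attached to at least one coordinate whose index is comparable to $|\mathbf i|$, so that the multi-index sum converges at the advertised algebraic rate in $\MMM$. The remaining steps --- the convergence estimate for the multi-index sum and the passage from level to number of points --- are essentially those of \cite{Barthelmann2000,Nobile2008}; the only genuinely new content is the explicit appearance of $\max\{\|u\|_{\mix,\infty},\|u\|_{\mix,\infty}^{N}\}$, which is the cost of not hiding the analyticity data inside the constant.
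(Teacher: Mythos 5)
Your proposal is correct and follows essentially the same route as the paper: both retrace the analytic-case analysis of \cite{Nobile2008}, keeping the one-dimensional analyticity quantities $|u|^{(n)}_{\mix,\infty}$ explicit rather than absorbing them into the constant, and both observe that the resulting prefactor is a sum of powers $\|u\|_{\mix,\infty}^{n}$ for $n=1,\dots,N$, hence bounded by $\max\{\|u\|_{\mix,\infty},\|u\|_{\mix,\infty}^{N}\}$. The paper simply invokes the intermediate results of \cite{Nobile2008} (Lemma 3.3 onward) with the constant $C$ replaced by $\widetilde C\,\|u\|_{\mix,\infty}$, whereas you re-derive the telescoping bookkeeping in more detail; the mathematical content is the same.
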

\begin{proof}
The estimation of the interpolation error of $u$ over the domain $\Gamma\subset \R^N$ is based on its one-dimensional counterparts. Indeed it was shown in \cite{Nobile2008} (see also  \cite{Babuska2007}, Lemma 4.4) that for functions $u$ in $C_{\mix}^{\infty}(\Gamma;W(D))$, 
\[
\|u-\mathscr U^{(i_n)}u\|_{C^0(\Gamma_n;W(D))}\leq  C i_n e^{-\sigma 2^{i_n}},
\]
where $\displaystyle\sigma=\max_{n=1,...,N}\frac{1}{2}\log\left(\frac{2\tau_n}{|\Gamma_n|}+\sqrt{1+\frac{4\tau_n^2}{|\Gamma_n|^2}}\right)$, and $C = \frac{4(\pi+1)e^{2\sigma}}{\pi(e^{2\sigma}-1)}\|u\|_{\mix,\infty}= \widetilde C \|u\|_{\mix,\infty}$. 
Lemma 3.3 in \cite{Nobile2008} then uses these estimates to bound the Smolyak interpolation by
\begin{align}
&\|u - \mathcal I_{\MMM}u\|_{C^0(\Gamma;W(D))}\leq \frac{1}{2}\sum_{n=1}^N (2 C)^n\sum_{\substack{i\geq 1\\ |i-1|=\nu}} \left(\prod_{l=1}^n i_l \right) e^{-\sigma \sum_{l=1}^n 2^{i_l-1}}\nonumber\\
\leq &\max\left\{\|u\|_{\mix,\infty},\|u\|_{\mix,\infty}^{N}\right\} \frac{1}{2}\sum_{n=1}^N (2 \widetilde C)^n\sum_{\substack{i\geq 1\\ |i-1|=\nu}} \left(\prod_{l=1}^n i_l \right) e^{-\sigma \sum_{l=1}^n 2^{i_l-1}}.
\end{align}
The remainder of the derivation in \cite{Nobile2008} (Lemma 3.4, and Theorems 3.6 and 3.9) remains unchanged, except for the replacement of the constant $C$ in with $\widetilde C$ and the addition of the term $\max\left\{\|u\|_{\mix,\infty},\|u\|_{\mix,\infty}^{N}\right\}$. Theorem 3.9 in \cite{Nobile2008} then asserts
\[
\|u - \mathcal I_M u\|_{C^0(\Gamma;W(D))}\leq c M^{-\mu_2}\max\{\|u\|_{\mix,\infty},\|u\|_{\mix,\infty}^{N}\}, 
\]
where
\[
c = \frac{C_1(\sigma,\delta^*)e^{\sigma}}{|1-C_1(\sigma,\delta^*)|}\max\{1,C_1(\sigma,\delta^*)\}^N, \ \ \ \mu_2 = \frac{\sigma}{1+\log(N)}, \ \text{and}  
\]
$C_1(\sigma,\delta^*)$ is defined in \cite{Nobile2008}, Equation (3.12).
\end{proof}
In summary, the sampling error estimates \eqref{eqn:sample_error_lagrange_mixed}, \eqref{eqn:sample_error_pw_linear} and \eqref{eqn:sampling_error_analytic} discussed in this section can therefore  all be written in the form%
\begin{equation}\label{ass:sample_error}
\|u - \mathcal I_M u\|_{W} \leq c_3 \log(\MMM)^{\mu_1}\MMM^{-\mu_2} \varphi(u_h),
\end{equation}
where $c_3\geq 1, \mu_1\geq 0,$ and $\mu_2>0$ and $\varphi:W(D)\rightarrow [0,\infty)$ satisfies $\varphi(u_n)\rightarrow 0$ for any sequence $u_n \rightarrow 0$ in $C_{\mix}^k(\Gamma;W(D))$ for $k\in \N\cup\{\infty\}$.

\section{The Efficiency of Sampling Methods}\label{section:efficiency_sampling}

\vspace{1em}
A useful indicator of an algorithm's efficiency is its $\varepsilon$-cost $\C_\varepsilon$, defined as the amount of computational effort required to reach a given level of accuracy $\varepsilon>0$. This effort can be measured in terms of the number of floating point operations or CPU time and is estimated based on \textit{a priori} error estimates. We now proceed to estimate the $\varepsilon$-cost of the sampling schemes discussed above. In general, the total cost $\C(\Q_{\MMM,h})$ of computing the estimate $\Q_{\MMM,h}$ is approximately
\[
\C(\QQQ_{\MMM,h}) = \sum_{m=1}^{\MMM} \C_h^{(m)},
\]
where $\C_{h}^{(m)}$ is the cost of computing the $m^{th}$ sample at spatial refinement level $h$. If the cost of a system solve is the same for all sample paths, i.e. $\C_h^{(m)}=\C_h$ for $m=1,...,\MMM$ then this sum simplifies to 
\begin{equation}\label{eqn:total_cost_simple}
\C(\QQQ_{\MMM,h})= M \C_h.
\end{equation}
Sampling methods are fully parallelizable and the cost savings of a parallel implementation can be readily incorporated into this cost estimate. Indeed, if the stochastic simulation is distributed among $N_\mathrm{batch}$ processors then the total cost is simply scaled by $\frac{1}{N_{\mathrm{batch}}}$.
In addition, we assume here that $\C_h$ grows polynomially with decreasing spatial refinement level $h$, i.e. there are constants $h_0 > 0$, $c_2 \geq 1$ and $\gamma > 0$, so that. 
\begin{equation}\label{ass:cost_per_sample}\tag{A2}
c_2 h^{-\gamma} \leq \C_h  \ \text{for all }  0 < h < h_0.
\end{equation}
The $\varepsilon$-cost for a sampling method can then be bounded by determining the lowest values of $h$ and $\MMM$ for which both the spatial error and the sampling error are less than $\frac{\varepsilon}{2}$, and substituting these values into \eqref{eqn:total_cost_simple}, using \eqref{ass:cost_per_sample}. Indeed, supposing the spatial disretization error has upper bound of the form $\|u-u_h\|_{L^\infty(\Gamma,W)}\leq c_1 h^\alpha$ for some $c_1\geq 1, \alpha>0$, then $h<\frac{1}{2c_1}\varepsilon^{\frac{1}{\alpha}}$ ensures that the spatial refinement error is within the tolerance level $\frac{\varepsilon}{2}$, and hence
\[
\C_h \geq c_2(2c_1)^\gamma \varepsilon^{-\frac{\gamma}{\alpha}}.
\]
If the upper bound in the generic sparse grid sampling error \eqref{ass:sample_error} doesn't contain a logarithmic term, i.e. if $\mu_1=0$, then it readily follows that a sample size $\MMM \geq (2c_3 \varphi(u_h))^{\frac{1}{\mu_2}}\varepsilon^{-\frac{1}{\mu_2}}$ guarantees 
a sampling error within the tolerance level $\frac{\varepsilon}{2}$. In this case, the $\varepsilon$-cost is at least
\begin{equation}\label{eqn:epsilon_cost_simple}
\C_{\varepsilon}(\QQQ_{\MMM,h}) = \MMM \C_h \geq (2c_3 \varphi(u_h))^{\frac{1}{\mu_2}} c_2(2c_1)^\gamma \varepsilon^{-\frac{1}{\mu_2}-\frac{\gamma}{\alpha}} = O(\varepsilon^{-\frac{1}{\mu_2}-\frac{\gamma}{\alpha}}).  
\end{equation}
We assume here implicitly that the term $\varphi(u_h)$ remains more or less unchanged as $h\rightarrow 0^+$, a reasonable assumption if $u_h \rightarrow u$. For the general case when $\mu_1>0$, the minimal sample size required $\MMM$ is slightly more involved. We derive such values in the following lemma. Note that for any $x\in \mathbb{R}$, $\lceil x \rceil$ denotes the unique integer $n$, so that $x\leq n < x + 1$.  

\begin{lemma}\label{lemma: epsilon cost for logs}
Let $0<\mu_2,\tilde\mu_2$ and $0<\mu_1\leq\tilde \mu_1$ be constants and suppose $0<\varepsilon<1$. 
If 
\begin{equation}\label{eqn: log error tol sample size}
\MMM = \left\lceil\varepsilon^{-\frac{1}{\tilde \mu_2}}\log\left(\varepsilon^{-1}\right)^{\frac{\tilde \mu_1}{\mu_2}}\right\rceil,
\end{equation}
then 
\begin{equation}
\MMM^{-\mu_2}\log\left(\MMM\right)^{\mu_1} \leq \left(1+\frac{\tilde \mu_1}{\mu_2}+\frac{1}{\tilde\mu_2}\right)^{\mu_1} \varepsilon^{\frac{\mu_2}{\tilde \mu_2}}
\end{equation}
\end{lemma}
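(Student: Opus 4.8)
The plan is to divide the asserted inequality through by $\varepsilon^{\mu_2/\tilde\mu_2}$ and thereby reduce it to a one‑sided estimate on $\log\MMM$. Put $t:=\log(\varepsilon^{-1})>0$ and $A:=\varepsilon^{-1/\tilde\mu_2}t^{\tilde\mu_1/\mu_2}$, so that $\MMM=\lceil A\rceil$, and note the identity $A^{-\mu_2}=\varepsilon^{\mu_2/\tilde\mu_2}t^{-\tilde\mu_1}$, which is immediate from the definition of $A$. If $\MMM=1$ the left‑hand side equals $1^{-\mu_2}(\log 1)^{\mu_1}=0$ (using $\mu_1>0$) and there is nothing to prove, so assume $\MMM\ge 2$, whence $A>1$. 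Since $\MMM\ge A$ we have $\MMM^{-\mu_2}\le A^{-\mu_2}=\varepsilon^{\mu_2/\tilde\mu_2}t^{-\tilde\mu_1}$, and hence
\[
\MMM^{-\mu_2}(\log\MMM)^{\mu_1}\ \le\ \varepsilon^{\mu_2/\tilde\mu_2}\Bigl(\tfrac{\log\MMM}{t^{\tilde\mu_1/\mu_1}}\Bigr)^{\mu_1}.
\]
It therefore suffices to prove $\log\MMM\le(1+\tfrac{\tilde\mu_1}{\mu_2}+\tfrac{1}{\tilde\mu_2})\,t^{\tilde\mu_1/\mu_1}$.

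For the bound on $\log\MMM$ I would start from the ceiling estimate: since $A>1$, $\MMM=\lceil A\rceil<A+1<2A$, so $\log\MMM<\log 2+\log A=\log 2+\tfrac{t}{\tilde\mu_2}+\tfrac{\tilde\mu_1}{\mu_2}\log t$. When $t\ge 1$ — that is, $\varepsilon\le e^{-1}$ — each of the three terms is dominated by a multiple of $t^{\tilde\mu_1/\mu_1}$: because $\tilde\mu_1/\mu_1\ge 1$ and $t\ge 1$ one has $1\le t\le t^{\tilde\mu_1/\mu_1}$, so $\log 2<1\le t^{\tilde\mu_1/\mu_1}$ and $\tfrac{t}{\tilde\mu_2}\le\tfrac{1}{\tilde\mu_2}t^{\tilde\mu_1/\mu_1}$, while the elementary inequality $\log t\le t$ gives $\tfrac{\tilde\mu_1}{\mu_2}\log t\le\tfrac{\tilde\mu_1}{\mu_2}t^{\tilde\mu_1/\mu_1}$. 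Summing these yields exactly $\log\MMM<(1+\tfrac{\tilde\mu_1}{\mu_2}+\tfrac{1}{\tilde\mu_2})t^{\tilde\mu_1/\mu_1}$, and the lemma follows in this regime, indeed with strict inequality.

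The case $t=\log(\varepsilon^{-1})<1$ is the delicate one and is where I expect the real difficulty to lie. Here $\log t<0$, so the termwise estimate must be reorganised, and, more seriously, $A=\varepsilon^{-1/\tilde\mu_2}t^{\tilde\mu_1/\mu_2}$ may lie only just above $1$, so that rounding up to $\MMM=\lceil A\rceil$ inflates $\log\MMM$ by an $O(1)$ amount that $t^{\tilde\mu_1/\mu_1}$ cannot always absorb; in such situations the crude step $\MMM^{-\mu_2}\le A^{-\mu_2}$ must be avoided. One first observes that $t<1$ forces $A<e^{1/\tilde\mu_2}$, so $\MMM$ ranges over the finite set $\{2,\dots,\lceil e^{1/\tilde\mu_2}\rceil\}$ and $\log\MMM<1+\tfrac{1}{\tilde\mu_2}\le 1+\tfrac{\tilde\mu_1}{\mu_2}+\tfrac{1}{\tilde\mu_2}$. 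Then one splits: if $\log\MMM\ge t/\tilde\mu_2$, comparing exponents gives $\MMM^{-\mu_2}=e^{-\mu_2\log\MMM}\le e^{-\mu_2t/\tilde\mu_2}=\varepsilon^{\mu_2/\tilde\mu_2}$, and the bound on $\log\MMM$ already closes the argument; if $\log\MMM<t/\tilde\mu_2$, one returns to $\MMM^{-\mu_2}\le A^{-\mu_2}$ and must verify $\log\MMM\le(1+\tfrac{\tilde\mu_1}{\mu_2}+\tfrac{1}{\tilde\mu_2})t^{\tilde\mu_1/\mu_1}$, now using the lower bound $t>\tilde\mu_2\log 2$ that $\MMM\ge 2$ and $\log\MMM<t/\tilde\mu_2$ supply. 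Carefully tracking the constant $(1+\tfrac{\tilde\mu_1}{\mu_2}+\tfrac{1}{\tilde\mu_2})^{\mu_1}$ through this last subcase — the bookkeeping being tightest when $\mu_1<\tilde\mu_1$ and $\tilde\mu_2$ is small — is the crux of the remaining estimate, and is the step I expect to require the most care.
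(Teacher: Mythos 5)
Your argument for the case $t:=\log(\varepsilon^{-1})\ge 1$ is correct and is, in essence, the paper's own proof: both bound $\MMM^{-\mu_2}$ from above by $A^{-\mu_2}=\varepsilon^{\mu_2/\tilde\mu_2}t^{-\tilde\mu_1}$ using $\MMM\ge A$, bound $\log\MMM$ via $\MMM<A+1$, and then absorb the leftover powers of $t$ into the constant using $t\ge 1$. The paper does this last absorption in its final display, where dropping the factors $\log(\varepsilon^{-1})^{-(\tilde\mu_1-\mu_1)}$ and $1/\log(\varepsilon^{-1})$ requires precisely $\log(\varepsilon^{-1})\ge 1$, a restriction the stated hypothesis $0<\varepsilon<1$ does not supply.

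You are right to be suspicious of the case $t<1$, but the subcase you leave open cannot be closed: the lemma as stated is false there. Take $\mu_1=\mu_2=1$, $\tilde\mu_1=10$, $\tilde\mu_2=1/100$ and $\varepsilon=e^{-1/2}$. Then $t=1/2$, $\MMM=\bigl\lceil e^{50}2^{-10}\bigr\rceil\approx 5.06\times 10^{18}$, and $\MMM^{-1}\log\MMM\approx 8.5\times 10^{-18}$, whereas the asserted bound is $(1+10+100)\,e^{-50}\approx 2.1\times 10^{-20}$ --- wrong by a factor of roughly $400$. The obstruction is exactly the one your reduction exposes: $A^{-\mu_2}=\varepsilon^{\mu_2/\tilde\mu_2}t^{-\tilde\mu_1}$ carries a factor $t^{-\tilde\mu_1}$ that is large when $t<1$ and $\tilde\mu_1>\mu_1$, and $(\log\MMM)^{\mu_1}$ is not small enough to compensate. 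So the correct resolution is not a more delicate estimate but a stronger hypothesis: the lemma should assume $0<\varepsilon\le e^{-1}$ (equivalently $\log(\varepsilon^{-1})\ge 1$), which is what Theorem~\ref{thm:multilevel_efficiency} imposes anyway through $\varepsilon<1/e$, and under which your first case already constitutes a complete (indeed strict) proof.
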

\begin{proof}
The definition of the $\lceil \cdot \rceil$ operation implies 
\[
\varepsilon^{-\frac{1}{\tilde\mu_2}}\log\left(\varepsilon^{-1}\right)^{\frac{\tilde\mu_1}{\mu_2}} \leq \MMM < \varepsilon^{-\frac{1}{\tilde\mu_2}}\log\left(\varepsilon^{-1}\right)^{\frac{\tilde\mu_1}{\mu_2}} + 1
\]
and hence
\begin{equation}\label{eqn: lemma eta bound}
\MMM^{-\mu_2} \leq \left( \varepsilon^{-\frac{1}{\tilde\mu_2}}\log\left(\varepsilon^{-1}\right) ^{\frac{\tilde\mu_1}{\mu_2}} \right)^{-\mu_2} = \varepsilon^{\frac{\mu_2}{\tilde\mu_2}} \log\left(\varepsilon^{-1}\right)^{-\tilde\mu_1}, 
\end{equation}
Moreover, using the inequality $\log(x)< \frac{x^s}{s}$ for all $x,s>0$ and the fact that $\varepsilon<1$, we get
\begin{align}
\log(\MMM)^{\mu_1}%
&< \log\left(\varepsilon^{-\frac{1}{\tilde\mu_2}}\log\left(\varepsilon^{-1}\right) ^{\frac{\tilde\mu_1}{\mu_2}} + 1\right)^{\mu_1} 
\leq \log\left(\varepsilon^{-\left(\frac{1}{\tilde\mu_2}+\frac{\tilde\mu_1}{\mu_2}\right)} + 1\right)^{\mu_1}  \nonumber\\
&< \log\left(\varepsilon^{-\left(\frac{1}{\tilde\mu_2}+\frac{\tilde\mu_1}
{\mu_2}\right)}+(e-1)\varepsilon ^{-\left(\frac{1}{\tilde\mu_2}+\frac{\tilde\mu_1}{\mu_2}\right)}\right)^{\mu_1}  \nonumber \\
&= \left(1 + \left(\frac{1}{\tilde\mu_2}+\frac{\tilde\mu_1}{\mu_2}\right)\log(\varepsilon^{-1})\right) ^{\mu_1}.\label{eqn: lemma log(eta) bound}
\end{align}
Combining inequalities \eqref{eqn: lemma eta bound} and \eqref{eqn: lemma log(eta) bound}  yields
\begin{align*}
\MMM^{-\mu_2}\log(\MMM)^{\mu_1} & \leq\log\left(\varepsilon^{-1}\right) ^{-\tilde\mu_1}\left(1+\left(\frac{1}{\tilde\mu_2} + \frac{\tilde\mu_1}{\mu_2}\right)
\log\left(\varepsilon^{-1}\right)\right)^{\mu_1} \varepsilon^{\frac{\mu_2}{\tilde\mu_2}}\\
&= \log\left(\varepsilon^{-1}\right) ^{-(\tilde\mu_1-\mu_1)}\left(\frac{1}{\log\left(\varepsilon^{-1}\right)} + \left(\frac{1}{\tilde\mu_2}+\frac{\tilde\mu_1}{\mu_2}\right)\right)^{\mu_1}\varepsilon^{\frac{\mu_2}{\tilde\mu_2}}\\
&\leq \left(1 + \left(\frac{1}{\tilde\mu_2}+\frac{\tilde\mu_1}{\mu_2}\right)\right) ^{\mu_1}\varepsilon^{\frac{\mu_2}{\tilde\mu_2}}.
\end{align*}
\end{proof}
\begin{remark}
By replacing $\varepsilon$ in formula \eqref{eqn: log error tol sample size} with 
\begin{equation}\label{eqn: smaller epsilon}
\tilde\varepsilon := \left(1 + \left(\frac{1}{\tilde\mu_2}+\frac{\tilde\mu_1}{\mu_2}\right)\right) ^{-\frac{\mu_1\tilde\mu_2}{\mu_2}}\varepsilon < \varepsilon < 1,
\end{equation}
we can in fact achieve the upper bound 
\[
\MMM^{-\mu_2}\log(\MMM)^{\mu_1} \leq \varepsilon.
\]
\end{remark}
The sample size $\MMM$ necessary to compute the $\varepsilon$-cost when $\mu_1>0$ is therefore of the order
\[
\MMM = O\left( \varepsilon^{-\frac{1}{\mu_2}}\log(\varepsilon^{-1})^{\frac{\mu_1}{\mu_2}}\right), 
\] 
leading to the $\varepsilon$-cost
\begin{equation}
\C_\varepsilon(\Q_{\MMM,h}) = O\left(\varepsilon^{-\frac{1}{\mu_2}-\frac{\gamma}{\alpha}}\log(\varepsilon^{-1})^{\frac{\mu_1}{\mu_2}}\right).
\end{equation}

\section{Multilevel Sampling}\label{section:multilevel}

\subsection{The Multilevel Algorithm}

Let $\{h_{\ell}\}_{\ell=0}^L$ be a sequence of spatial discretization parameters giving an increasing level of accuracy and let $h_L$ be chosen to ensure that the spatial error term in \eqref{eqn: split generic error} satisfies
\[
\|\QQQ - \QQQ_{h_L}\|_{\widetilde W} \leq \frac{\varepsilon}{2}. 
\]
Multilevel quadrature methods are based on an expansion of this fine scale approximation $G(u_{h_L})$ as the sum of an initial coarse scale approximation and a series of correction terms, i.e.
\[
G(u_{h_L}) = G(u_{h_0}) + \sum_{\ell=1}^L \left( G(u_{h_\ell}) - G(u_{h_{\ell-1}}) \right).
\]
Taking expectations on both sides yields
\[
\QQQ_{h_L} = \QQQ_{h_0} + \sum_{\ell=1}^L (\QQQ_{h_\ell} - \QQQ_{h_{\ell-1}})
\]
We now further estimate $\QQQ_{h_L}$ by approximating both the coarse approximation and each correction term in the above sum using a different interpolant, i.e.
\begin{equation}\label{eqn:multilevel estimate}
\QQQ \approx \QQQ_{\{M_\ell\},\{h_\ell\}}^{\mathrm{MLSC}} := \QQQ_{M_0,h_0}^{\mathrm{SC}} + \sum_{\ell = 1}^L \left(\QQQ_{M_\ell,h_\ell}^{\mathrm{SC}} - \QQQ_{M_\ell,h_{\ell-1}}^{\mathrm{SC}}\right). 
\end{equation}
Since the stochastic interpolation levels, i.e. the sample sizes $M_\ell$ can be chosen separately for each spatial refinement level, using the multilevel estimate gives us the flexibility to coordinate the sample sizes $M_0,...,M_L$ in such a way that more samples are drawn at coarse spatial refinement levels while samples at finer spatial refinement levels are sampled more sparingly, hopefully improving the computational efficiency. 
For the sake of comparison, we refer to the sampling methods discussed in the previous section as single level sampling methods, since only spatial discretizations at the highest refinement level $h_L$ are sampled. 

\vspace{1em}

The total error for the multilevel estimate can now be decomposed as follows
\begin{align}
&\left\|\QQQ -\QQQ_{\{M_\ell\},\{h_\ell\}}^{\mathrm{MLSC}} \right\|_{\widetilde W} \nonumber\\
\leq\ & \underbrace{\left\|\QQQ - \QQQ_{h_L}\right\|_{\widetilde W}}_{spatial\ error} + \underbrace{\left\|\QQQ_{h_0}-\QQQ_{M_0,h_0}^{\mathrm{SC}}\right\|_{\widetilde W} + \sum_{\ell = 1}^{L}  \left\|\left(\QQQ_{h_\ell} - \QQQ_{h_{\ell-1}}\right) - \left(\QQQ_{M_\ell,h_\ell}^{\mathrm{SC}} -\QQQ_{M_\ell, h_{\ell-1}}^{\mathrm{SC}}\right)\right\|_{\widetilde W}}_{multilevel\ sampling\ error}\label{eqn:multilevel_error}.
\end{align}
Just as in the total approximation error \eqref{eqn: split generic error} for single level sampling methods, the error in \eqref{eqn:multilevel_error} can thus be decomposed into a spatial discretization error, depending only on $h_L$ and a multilevel sampling error, quantifying the accuracy with which of the correction terms $G(u_{h_\ell})-G(u_{h_{\ell-1}})$ are approximated through interpolation. 

\vspace{1em}

The basic multilevel sampling method, based on numerical estimates $e_L^{\mathrm{space}}$ and $e_L^{\mathrm{sample}}$ of the spatial error and the multilevel sampling error respectively, is outlined in Algorithm \eqref{alg:basic_ml}.

\vspace{1em}

 \begin{algorithm}[H]
 \SetAlgoLined
 \SetKwInOut{Input}{Input}
 \SetKwInOut{Output}{Output}
 \Input{Tolerance level $\varepsilon > 0$, initial discretization level $h_0$}
 \Output{Maximum refinement level $L$, multilevel estimate $\QQQ_{\{\MMM_\ell\},\{h_\ell\}}^{\mathrm{MLSC}}$ of $\QQQ$}
 \BlankLine
 Determine initial sample size $\MMM_0$\;
 Generate sample $\left\{u_{h_0}(\bx,\vecy^m)\right\}_{m=1}^{\MMM_0}$ and compute $\Q_{\{\MMM_0\},\{h_0\}}^{\mathrm{MLSC}}=\QQQ_{M_0,h_0}^{\mathrm{SC}}$\;
 Set spatial error estimate $e_0^{\mathrm{space}}=1$, maximum refinement level $L = 0$\;
 \While{$e_L^{\mathrm{space}} > \frac{\varepsilon}{2}$}
 {
 $L \leftarrow L + 1$ \;
 Refine the model at new discretization level $h_L$\;\label{alg1_refine}
 Determine $\{\MMM_0,...,\MMM_L\}$ so that $ e_{L}^{\mathrm{space}} + e_{L}^{\mathrm{sample}}< \varepsilon$ while minimizing the total computational cost $\C\left(\Q_{\{\MMM_\ell\},\{h_\ell\}}^{\mathrm{MLSC}}\right)$\;\label{alg1_optimization} 
 Generate the samples $\left\{u_{h_\ell}(\vecy^{m})\right\}_{m=1}^{\MMM_\ell}$ for $\ell=0,...,L$\;\label{alg1_correction_samples}
 Update the multilevel estimate $\Q_{\{\MMM_\ell\},\{h_{\ell}\}}^{\mathrm{MLSC}}$\;\label{alg1:ml_update}
 Compute $e_L^{\mathrm{space}}$\;\label{alg1:error_estimation}
 }
 \caption{Basic multilevel sampling algorithm}\label{alg:basic_ml}
 \end{algorithm}

\vspace{1em}

We elaborate on some of the lines in Algorithm \ref{alg:basic_ml}, and outline some of the outstanding issues addressed in the remainder of this paper.
Traditionally (see \cite{Giles2008, Cliffe2011, Charrier2011}), the spatial grid refinement step \ref{alg1_refine} is achieved by scaling the mesh spacing parameter by a fixed percentage, i.e. $h_{L+1} = s h_L$ for $L=1,2,...$ and $0 < s < 1$. While this construction is convenient to analyze, it is not necessary for the convergence of the algorithm. In fact, the determination of adaptive mesh refinement strategies in this context is a topic of ongoing research.

\vspace{1em}

In some cases the integrand $G(\mathcal I_M u_h)$ is a spatially varying function, defined on some spatial mesh $\mathcal T_{h}$. The computation of the sample correction paths $\displaystyle G\big(\mathcal I_{M_\ell}u_{h_\ell}(\vecy^m)\big) - G\big(\mathcal I_{M_\ell}u_{h_{\ell-1}}(\vecy^m)\big)$ (line \ref{alg1_correction_samples}) that are used to update the multilevel estimate (see line \eqref{alg1:ml_update}), requires the spatial interpolation of $v_{h_\ell-1}^{(i)}$ at points on the refined mesh $\mathcal T_{h_\ell}$. In \cite{Barth2010}, this additional cost is mitigated through the use of hierarchical finite elements \cite{Yserentant1986}. For general spatial domains $D$, such hierarchical approximations are however not always tractable. 

\vspace{1em}

One benefit of using nested grids, such as the Clenshaw-Curtis sparse grid, is that the interpolant $\mathcal I_{M_{\ell-1}}u_{h_{\ell-1}}$, computed as the `fine' spatial grid interpolant of the previous correction term, can be used to construct the `coarse' spatial grid interpolant $I_{M_\ell}u_{h_{\ell-1}}$ of the next correction term. In fact if $M_{\ell-1} > M_{\ell}$, which is likely to be the case for the optimal sample sizes, no additional sample paths need to be generated. In contrast, Monte Carlo sampling requires sample paths of correction terms to be independent, which prohibits the re-use of sample paths. 

\vspace{1em}

Similar to single level methods, the total cost of computing the multilevel estimate \eqref{eqn:multilevel estimate} is dominated by the construction of the interpolants, i.e.
\begin{equation}\label{eqn:sg_total_cost}
\C\left( \QQQ_{\{M_\ell\},\{h_\ell\}}^{\mathrm{MLSC}}\right) \approx \sum_{\ell = 0}^L M_\ell C_\ell, 
\end{equation}
where $C_\ell$ is the combined cost of computing the sample paths of $u_{h_\ell}(\vecy^m)$ and $u_{h_{\ell-1}}(\vecy^{m})$ for each $m = 1,...,M_\ell$.

\vspace{1em}

\subsection{The Optimal Allocation Sub-Problem}

The determination of optimal sample sizes $\{\MMM_0,...,\MMM_L\}$ in \eqref{eqn:multilevel estimate} represents the most important step of Algorithm \ref{alg:basic_ml} and can be succinctly formulated as a discrete constrained optimization problem in $L+1$ variables. Since the spatial error is independent of the sample size, this term can be ignored. The sample sizes $\MMM_0,...,\MMM_L$ should then be chosen so as to minimize the total computational effort, while maintaining a sample error that is within. For convenience, we require both the sampling- and spatial errors to be bounded above by $\varepsilon/2$. Written as an optimization problem, line \ref{alg1_optimization} amounts to
\begin{equation}\label{eqn:optimization_general}
\begin{split}
& \min_{\MMM_0,...,\MMM_L} \sum_{\ell = 0}^L M_\ell C_\ell, \\ 
\text{subject to}\ &\  \left\|\QQQ_{h_0}-\QQQ_{M_0,h_0}^{\mathrm{SC}}\right\|_{\widetilde W} + \sum_{\ell = 1}^{L}  \left\|\left(\QQQ_{h_\ell} - \QQQ_{h_{\ell-1}}\right) - \left(\QQQ_{M_\ell,h_\ell}^{\mathrm{SC}} -\QQQ_{M_\ell, h_{\ell-1}}^{\mathrm{SC}}\right)\right\|_{\widetilde W} \leq \frac{\varepsilon}{2}.
\end{split}
\end{equation} 
Like the single-level sampling methods, the multi-level Algorithm \ref{alg:basic_ml} is amenable to parallel implementation, the effect of which can be incorporated into the total cost by simply dividing throughout by the batch size $N_\mathrm{batch}$. Since the inclusion of this factor does not change the optimization problem \eqref{eqn:optimization_general}, we leave it out for simplicity.

\vspace{1em}

As a matter of notational convenience, we define 
\[
\triangle u_\ell := \left\{ \begin{array}{ll}
u_{h_0} & \text{ for } \ell = 0\\
u_{h_\ell} - u_{\ell-1} &\text{ for } \ell = 1,2,...,L 
\end{array}\right.
\]
We want to bound the multilevel sampling error in \eqref{eqn:multilevel_error} by an expression involving the interpolation error $\|\triangle u_\ell - \mathcal I_M \triangle u_\ell\|_{C^0(\Gamma;W)}$ for which we have \emph{a priori} error estimates, such as \eqref{eqn:sample_error_lagrange_mixed},\eqref{eqn:sample_error_pw_linear}, or \eqref{eqn:sampling_error_analytic}. For the coarsest refinement level, $\ell =0$, this can achieved using Jensen's inequality together with Assumption \ref{ass:G regularity}, yielding
\begin{align*}
\left\| \QQQ_{h_0} - \QQQ_{M_0,h_0}^{\mathrm{SC}}\right\|_{\widetilde W} &= \left\|\E[G(u_{h_0})-G(\mathcal I_{M_0} u_{h_0})]\right\|_{\widetilde W}\leq \E\left[\|G(u_{h_0})-G(\mathcal I_{M_0}u_{h_0})\|_{\widetilde W}\right]\\
&\leq \|C_G\|_{L^1_\rho}\|u_{h_0} - \mathcal I_{M_0}u_{h_0}\|_{C^0(\Gamma;W)}\leq \tilde c_3 \log(M_0)^{\mu_1}M_0^{-\mu_2}\varphi(u_{h_0}).
\end{align*}
for the appropriate constant $\tilde c_3$. To ensure that similar upper bounds hold for the higher order correction terms, we make the following assumption. 

\begin{assumption}\label{ass:G frechet}
Assume that the mapping $G:W(D) \rightarrow \widetilde W(D)$ is continuously Fr\'echet differentiable.
\end{assumption}

Note that we try to remain agnostic regarding the smoothness of $G(u_h)$ with respect to the vector $\vecy\in \Gamma$, allowing the estimation of the integral $\int_\Gamma G(\mathcal I_{M} u_h)\rho(\vecy)\;d\vecy$ to be treated separately from the interpolation $\mathcal I_M u_h$ of $u_h$. Recall that  $\QQQ_{M,h}:=\int_\Gamma G(\mathcal I_M u_h)\rho(\vecy)\;d\vecy$.

\begin{lemma}
Suppose $u\in C^0(\Gamma,W)$ satisfies \eqref{eqn:elliptic_weak_y}, Assumption \ref{ass:G frechet} holds, and $Q$ is estimated by the multilevel estimate \eqref{eqn:multilevel estimate}. Then there exist constants $C_{G'}, C_{G''}>0$ such that for $\ell=1,2,...,L$
\begin{equation}\label{eqn:bound_mlv_by_mlu}
\begin{split}
\left\|\left(\QQQ_{h_\ell} - \QQQ_{h_{\ell-1}}\right) - \left(\QQQ_{M_\ell,h_\ell}^{\mathrm{SC}} -\QQQ_{M_\ell, h_{\ell-1}}^{\mathrm{SC}}\right)\right\|_{\widetilde W} \leq & \phantom{+} ( C_{G'} + C_{G''}\|\triangle u_\ell\|_{C^0(\Gamma,W)} ) \|\triangle u_\ell - \mathcal I_{M_\ell} \triangle u_\ell\|_{C^0(\Gamma,W)}\\
 & + C_{G''}  \|\triangle u_\ell\|_{C^0(\Gamma,W)} \|u_{h_{\ell-1}}-\mathcal I_{M_\ell} u_{h_{\ell-1}}\|_{C^0(\Gamma,W)}
\end{split}
\end{equation}

\end{lemma}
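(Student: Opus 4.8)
The plan is to reduce \eqref{eqn:bound_mlv_by_mlu}, via linearity of $\E$ and Jensen's inequality, to a pointwise-in-$\vecy$ estimate, then to obtain that pointwise estimate by realizing the integrand as $\psi(1)-\psi(0)$ for a carefully chosen $C^1$ path $\psi$ and integrating $\psi'$ after a Taylor-type splitting. Concretely, since $\QQQ_{h_\ell}=\E[G(u_{h_\ell})]$, $\QQQ^{\mathrm{SC}}_{M_\ell,h_\ell}=\E[G(\mathcal I_{M_\ell}u_{h_\ell})]$, and $\mathcal I_{M_\ell}u_{h_\ell}-\mathcal I_{M_\ell}u_{h_{\ell-1}}=\mathcal I_{M_\ell}\triangle u_\ell$ by linearity of $\mathcal I_{M_\ell}$, the left-hand side of \eqref{eqn:bound_mlv_by_mlu} equals $\big\|\E[X(\vecy)]\big\|_{\widetilde W}$ with
\[
X(\vecy):=\big(G(u_{h_\ell})-G(u_{h_{\ell-1}})\big)-\big(G(\mathcal I_{M_\ell}u_{h_\ell})-G(\mathcal I_{M_\ell}u_{h_{\ell-1}})\big).
\]
Jensen's inequality for the Bochner integral gives $\big\|\E[X]\big\|_{\widetilde W}\le\E\big[\|X(\vecy)\|_{\widetilde W}\big]$, and since the target bound is a sum of $\vecy$-independent constants, it suffices to prove the pointwise inequality for a.e.\ fixed $\vecy$.

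For the pointwise estimate, fix $\vecy$ and abbreviate $u:=u_{h_{\ell-1}}(\vecy)$, $\delta:=\triangle u_\ell(\vecy)$, $r:=u-\mathcal I_{M_\ell}u_{h_{\ell-1}}(\vecy)$, and $e:=\delta-\mathcal I_{M_\ell}\triangle u_\ell(\vecy)$, so that $u_{h_\ell}(\vecy)=u+\delta$, $\mathcal I_{M_\ell}u_{h_{\ell-1}}(\vecy)=u-r$, and $\mathcal I_{M_\ell}u_{h_\ell}(\vecy)=u+\delta-r-e$. Define $\psi:[0,1]\to\widetilde W(D)$ by $\psi(t):=G(u+t\delta)-G(u+t\delta-r-te)$; then $\psi(1)-\psi(0)=X(\vecy)$, and $\psi\in C^1$ by Assumption \ref{ass:G frechet}, so $X(\vecy)=\int_0^1\psi'(t)\,dt$ with $\psi'(t)=G'(u+t\delta)[\delta]-G'(u+t\delta-r-te)[\delta-e]$. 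The key move is to add and subtract $G'(u+t\delta-r-te)[\delta]$, which produces
\[
\psi'(t)=\big(G'(u+t\delta)-G'(u+t\delta-r-te)\big)[\delta]+G'(u+t\delta-r-te)[e];
\]
pairing $G'$ at the \emph{interpolated} argument with the \emph{full} increment $\delta$ (rather than with $\delta-e$) is exactly what makes the bound come out in the stated shape. Using a uniform bound $\|G'(w)\|_{\mathcal L(W,\widetilde W)}\le C_{G'}$ on the second term and a local Lipschitz estimate $\|G'(w_1)-G'(w_2)\|_{\mathcal L(W,\widetilde W)}\le C_{G''}\|w_1-w_2\|_W$ on the first, together with the fact that the argument difference in the first bracket is $r+te$, yields $\|\psi'(t)\|_{\widetilde W}\le C_{G''}\|\delta\|_W(\|r\|_W+\|e\|_W)+C_{G'}\|e\|_W$; integrating in $t$, bounding $\|\delta\|_W,\|r\|_W,\|e\|_W$ by the corresponding $C^0(\Gamma,W)$-norms, and feeding the result back through the reduction step gives precisely \eqref{eqn:bound_mlv_by_mlu}.

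The one genuinely delicate point is legitimizing the two constants $C_{G'}$ and $C_{G''}$: they must be, respectively, a bound on $\|G'\|$ and a Lipschitz constant for $G'$ on a single set containing all the points $u+t\delta-sr-ste$, $s,t\in[0,1]$, uniformly over a.e.\ $\vecy$ and over the finitely many levels $\ell=1,\dots,L$. The argument is that $\{u_{h_\ell}(\vecy):\vecy\in\Gamma\}$ is bounded (finite element stability together with $u\in L^\infty_\rho(\Gamma,H^1_0(D)\cap H^2(D))$), hence so is its image under the uniformly bounded interpolation operators $\mathcal I_{M_\ell}$, hence so is the closed convex hull of the union of these sets, and $G'$ is bounded and locally Lipschitz there. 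I expect this to be the main obstacle, and it also exposes that the estimate uses slightly more than the bare $C^1$ hypothesis of Assumption \ref{ass:G frechet}, namely a local Lipschitz property of $G'$ (e.g.\ $G\in C^{1,1}_{\mathrm{loc}}$, in particular $G\in C^2$), which holds in all the concrete choices of $G$ of interest; everything else is the routine Taylor expansion and triangle inequality sketched above.
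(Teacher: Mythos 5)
Your proof is correct and follows essentially the same route as the paper's: Jensen's inequality reduces the claim to a pointwise bound, the integrand is written via the fundamental theorem of calculus along the segment from $u_{h_{\ell-1}}$ to $u_{h_\ell}$ (your single path $\psi(t)$ is just a repackaging of the paper's two integral-form Taylor expansions), the same add-and-subtract of $G'$ at the interpolated argument applied to the full increment $\triangle u_\ell$ produces the two terms, and these are bounded by a uniform bound on $G'$ and a Lipschitz bound on $G'$ (which the paper realizes through $G''$), yielding the same constants $C_{G'}$ and $C_{G''}$. Your side remark that this requires slightly more than the bare $C^1$ hypothesis of Assumption \ref{ass:G frechet} is accurate --- the paper's own proof invokes $G''$ without strengthening that assumption.
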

\begin{proof}

For spatial refinement levels $\ell\geq 1$, we use Jensen's inequality to obtain
\begin{align*}
& \left\|\E\big[G(u_{h_\ell})-G(u_{h_{\ell-1}}\big]-\E\big[G(\mathcal I_{M_\ell} u_{h_\ell})-G(\mathcal I_{M_\ell} u_{h_{\ell-1}})\big]\right\|_{\widetilde W}\\
\leq\ & \E\left[\left\|G(u_{h_\ell})-G(u_{h_{\ell-1}})-G(\mathcal I_{M_\ell} u_{h_\ell})-G(\mathcal I_{M_\ell} u_{h_{\ell-1}}])\right\|_{\widetilde W}\right]\\
\leq\  &\left\|G(u_{h_\ell})-G(u_{h_{\ell-1}})-G(\mathcal I_{M_\ell} u_{h_\ell})-G(\mathcal I_{M_\ell} u_{h_{\ell-1}}])\right\|_{C^0(\Gamma,\widetilde W)}. 
\end{align*}
For any fixed $\vecy\in\Gamma$, we now let $\triangle u_\ell = u_{h_\ell} - u_{h_{\ell-1}}$ make use of Taylor's Theorem for Banach spaces and the linearity of $\mathcal I_M$ to obtain
\begin{align*}
& G(u_{h_\ell})-G(u_{h_{\ell-1}})-\left(G(\mathcal I_{M_\ell} u_{h_\ell})-G(\mathcal I_{M_\ell} u_{h_{\ell-1}})\right)\\
=& \int_0^1 G'(u_{\ell-1} + t\triangle u_\ell)\triangle u_\ell dt  - \int_0^1 G'(\mathcal I_{M_\ell} u_{h_{\ell-1}} + t \mathcal I_{M_\ell} \triangle u_\ell)\mathcal I_{M_\ell} \triangle u_\ell dt\\
=& \left(\int_0^1 G'(u_{\ell-1} + t\triangle u_\ell)-G'(\mathcal I_{M_\ell}(u_{h_{\ell-1}} + t \triangle u_\ell)) dt\right) \triangle u_\ell \\
& \phantom{=} - \left(\int_0^1 G'(\mathcal I_{M_\ell}  u_{h_{\ell-1}} + t  \mathcal I_{M_\ell}\triangle u_\ell)dt\right)  (\triangle u_\ell - \mathcal I_{M_\ell} \triangle u_\ell)
\end{align*}
The first term can be further simplified through
\begin{align*}
& \left\|\left(\int_0^1 G'(u_{h_{\ell-1}} + t\triangle u_\ell )- G'(\mathcal I_{M_\ell}(u_{h_{\ell-1}} + t  \triangle u_\ell)) dt\right) \triangle u_\ell \right\|_{\widetilde W}\\
= & \left\| \int_0^1\int_0^1 G''( \xi(t,s) )ds\;(u_{h_{\ell-1}}-\mathcal I_{M_\ell} u_{h_{\ell-1}} + t(\triangle u_\ell - \mathcal I_{M_\ell}\triangle u_\ell))dt (\triangle u_\ell)\right\|_{\widetilde W}\\
\leq & \sup_{s,t\in[0,1]}\|G''(\xi(s,t))\| \left(\|u_{h_{\ell-1}}-\mathcal I_{M_\ell} u_{h_{\ell-1}}\|_{W} + \|\triangle u_\ell - \mathcal I_{M_\ell} \triangle u_\ell \|_{W}\right)\|\triangle u_\ell \|_W
\end{align*}
where
\[
\xi(t,s) = \mathcal I_{M_\ell}(u_{h_{\ell-1}}+t \triangle u_\ell) + s (u_{h_{\ell-1}}-\mathcal I_{M_\ell} u_{h_{\ell-1}} + t(\triangle u_\ell - \mathcal I_{M_\ell}\triangle u_\ell)).
\]
Therefore, 
\begin{align*}
&\left\| G(u_{h_\ell})-G(u_{h_{\ell-1}})- G(\mathcal I_{M_\ell} u_{h_\ell})- G(\mathcal I_{M_\ell} u_{h_{\ell-1}}) \right\|_{\widetilde W}\\
\leq  & \phantom{+} \sup_{t\in[0,1]}\left\| G'(\mathcal I_{M_\ell}  u_{h_{\ell-1}} + t  \mathcal I_{M_\ell}\triangle u_\ell)\right\| \|\triangle u_\ell - \mathcal I_{M_\ell} \triangle u_\ell\|_W \\
& +  \sup_{s,t\in[0,1]}\|G''(\xi(s,t))\| \left(\|u_{\ell-1}-\mathcal I_{M_\ell} u_{h_{\ell-1}}\| + \|\triangle u_\ell - \mathcal I_{M_\ell} \triangle u_\ell \|_{W}\right)\|\triangle u_\ell \|_W 
\end{align*}
Taking maxima on both sides then produces the bound \eqref{eqn:bound_mlv_by_mlu} with
\[
C_{G'} =  \sup_{t\in[0,1]} \left\| G'(\mathcal I_{M_\ell}  u_{h_{\ell-1}} + t  \mathcal I_{M_\ell}\triangle u_\ell)\right\| \text{ and } C_{G''} = \sup_{s,t\in[0,1]}\|G''(\xi(s,t))\|.
\]
\end{proof}

Since $\|\triangle u_\ell\|_{C^0(\Gamma,W)}\leq \|\triangle u_\ell\|_{\mix,k}\leq \varphi(\triangle u_\ell)$ for $k\in\N\cup\{\infty\}$, we can further bound the error in \eqref{eqn:bound_mlv_by_mlu} in terms of the generic sampling error \eqref{ass:sample_error}. Indeed, 
\[
\big( C_{G'} + C_{G''}\|\triangle u_\ell\|_{C^0(\Gamma,W)} \big) \|\triangle u_\ell - \mathcal I_{M_\ell} \triangle u_\ell\|_{C^0(\Gamma,W)}\leq c \log(M_\ell)^{\mu_1}M_\ell^{-\mu_2}\varphi(\triangle u_\ell)
\]
while
\[
C_{G''}  \|\triangle u_\ell\|_{C^0(\Gamma,W)} \|u_{h_{\ell-1}}-\mathcal I_{M_\ell} u_{h_{\ell-1}}\|_{C^0(\Gamma,W)} \leq C_{G''} \tilde c_3\log(M_\ell)^{\mu_1}M_\ell^{-\mu_2}\varphi( u_{h_\ell})\varphi(\triangle u_\ell).
\]
Combining these two estimates finally allows us to write
\begin{equation}\label{eqn:ml_sampling_error_ub}
\left\|\left(\QQQ_{h_\ell} - \QQQ_{h_{\ell-1}}\right) - \left(\QQQ_{M_\ell,h_\ell}^{\mathrm{SC}} -\QQQ_{M_\ell, h_{\ell-1}}^{\mathrm{SC}}\right)\right\|_{\widetilde W}\leq c_3 \log(M_\ell)^{\mu_1}M_\ell^{-\mu_2}\varphi(\triangle u_\ell).
\end{equation}

\vspace{1em}

The optimal allocation sub-problem \eqref{eqn:optimization_general} can therefore be approximated by
\begin{equation}\label{eqn:optimization_general_a_priori}
\begin{split}
& \min_{\MMM_0,...,\MMM_L} \sum_{\ell = 0}^L M_\ell C_\ell,
\ \text{ subject to}\ \  c_3 \sum_{\ell=0}^L \log(M_\ell)^{\mu_1}M_\ell^{-\mu_2}\varphi(\triangle u_\ell) \leq \frac{\varepsilon}{2}.
\end{split}
\end{equation}

Like the single-level sampling methods, the multi-level Algorithm \ref{alg:basic_ml} is amenable to parallel implementation, the effect of which can be incorporated into the total cost by simply dividing throughout by the batch size $N_\mathrm{batch}$. Since the inclusion of this factor does not change the optimization problem \eqref{eqn:optimization_general}, we leave it out for simplicity.

\vspace{1em}

In general, problem \eqref{eqn:optimization_general} is not solved exactly, but rather formulae for $\MMM_0,...,\MMM_L$ are derived heuristically, either based on the equilibration of errors \cite{Graham2011, Harbrecht2013} or on a continuum approximation \cite{Charrier2011, Cliffe2011, Teckentrup2012}. We pursue the latter approach, i.e. to determine the optimal sample sizes, we assume for the moment that the variables $\MMM_0,\hdots,\MMM_L$ are continuous. The continuous optimization problem has relatively few variables, since $L$ is usually not too large. If in addition, the error estimates are approximated numerically, based on the general form of the generic estimate \eqref{ass:sample_error}, explicit formulae can be derived for the minimizers $\MMM_0,\hdots,\MMM_L$ in problem \eqref{alg1_optimization}, which are rounded up to the nearest admissible sample sizes. We discuss this `binning' procedure after the optimal sample sizes are derived.

\vspace{1em}

We are now in a position to estimate the optimal sample sizes $\MMM_0,\MMM_1,...,\MMM_L$ needed for our multilevel algorithm.  Again, we find it convenient to differentiate between sampling errors with- and without a logarithmic term. 

\subsection{Optimal Sample Sizes when $\mu_1=0$}
If the sampling error estimate in \eqref{eqn:ml_sampling_error_ub} is of the form 
\begin{equation}\label{eqn:sampling_error_alg}
\left\|\left(\QQQ_{h_\ell} - \QQQ_{h_{\ell-1}}\right) - \left(\QQQ_{M_\ell,h_\ell}^{\mathrm{SC}} -\QQQ_{M_\ell, h_{\ell-1}}^{\mathrm{SC}}\right)\right\|_{\widetilde W}\leq c_3 \MMM_\ell^{-\mu_2}\varphi(\triangle u_\ell)
\end{equation}
then optimization problem \ref{eqn:optimization_general_a_priori} is given by
\begin{equation}\label{eqn:optimization_alg}
\min_{\MMM_0,...,\MMM_L} \sum_{\ell=0}^L M_\ell\C_\ell , \ \ \text{subject to } c_3 \sum_{\ell=0}^L M_\ell^{-{\mu_2}}\varphi(\triangle u_\ell) \leq \frac{\varepsilon}{2}.
\end{equation}
Since the cost functional is simply a hyperplane and the constraint set is convex in $\R^{L+1}$, a unique minimizer of \eqref{eqn:optimization_alg} exists and can be readily determined via Lagrange multipliers. Moreover, at the optimum the constraint is clearly active. The Lagrangian then takes form 
\[
\mathcal L(\MMM_0,...,\MMM_L;\lambda):=\sum_{\ell=0}^L \mathcal C_\ell\MMM_\ell + \lambda \left( c_3\sum_{\ell=0}^L \MMM_\ell^{-\mu_2}\varphi(\triangle u_\ell) - \frac{\varepsilon}{2}\right),
\]
and its stationary points, obtained by letting $\frac{\partial\mathcal L}{\partial \MMM_\ell} = 0$ for $\ell=0,...,L$, satisfy
\[
\mathcal \C_\ell - \lambda c_3 \mu_2 \MMM_\ell^{-(\mu_2+1)}\varphi(\triangle u_\ell) = 0 \Rightarrow \MMM_\ell = \left(\frac{c_3\lambda \mu_2 \varphi(\triangle u_\ell)}{\C_\ell}\right)^{\frac{1}{\mu_2 + 1}}.
\]
Enforcing the equality constraint, 
\begin{align*}
\frac{\varepsilon}{2} = c_3 \sum_{\ell = 0}^L \MMM_\ell^{-\mu_2} \varphi(\triangle u_\ell) =  c_3 \sum_{\ell = 0}^L  \varphi(\triangle u_\ell) \left(\frac{c_3\lambda \mu_2 \varphi(\triangle u_\ell)}{\mathcal C_\ell}\right)^{-\frac{\mu_2}{\mu_2 + 1}}
\end{align*}
gives 
\[
(\lambda \mu_2)^{\frac{1}{\mu_2 + 1}} = \left(\frac{2}{\varepsilon}\sum_{\ell=0}^L (c_3\mathcal C_\ell^{\mu_2} \varphi(\triangle u_\ell))^{\frac{1}{\mu_2+1}}\right)^{\frac{1}{\mu_2}}
\]
and hence
\begin{equation}\label{eqn:optimization_alg_opt_sample}
\MMM_\ell = (2c_3\varepsilon^{-1})^\frac{1}{\mu_2}\left(\sum_{\ell' =0}^L (\mathcal C_{\ell'}^{\mu_2} \varphi(\triangle u_{\ell'}))^\frac{1}{\mu_2+1}\right)^{\frac{1}{\mu_2}} \left(\frac{\varphi(\triangle u_\ell)}{\mathcal C_\ell}\right)^{\frac{1}{\mu_2+1}}, \ \ \text{for }\ell = 0,...,L.
\end{equation}%
With this choice of $\MMM_0,...,\MMM_L$, the total cost satisfies%
\begin{align}
\sum_{\ell=0}^L \C_\ell \MMM_\ell &= \sum_{\ell=0}^L \C_\ell(2c_3\varepsilon^{-1})^\frac{1}{\mu_2} \left(\sum_{\ell' =0}^L (\C_{\ell'}^{\mu_2} \varphi(\triangle u_{\ell'}))^\frac{1}{\mu_2+1}\right)^{\frac{1}{\mu_2}} \left(\frac{\varphi(\triangle u_\ell)}{\C_\ell}\right)^{\frac{1}{\mu_2+1}}\nonumber\\
&= (2c_3\varepsilon^{-1})^\frac{1}{\mu_2} \left(\sum_{\ell =0}^L (\C_{\ell}^{\mu_2} \varphi(\triangle u_{\ell}))^\frac{1}{\mu_2+1}\right)^{\frac{\mu_2+1}{\mu_2}}.\label{eqn:optimization_alg_total_cost}
\end{align}

\subsection{Optimal Sample Sizes when $\mu_1>0$}

To obtain the candidate sample sizes $\MMM_0,...,\MMM_L$ in this case, we write down the optimization problem again, this time with the sampling error involving a logarithmic term%
\begin{equation}\label{eqn:optimization_log}
\min_{\MMM_0,...,\MMM_L > 1} 
\sum_{\ell=0}^L\mathcal C_\ell\MMM_\ell,
\ \text{ subject to }
\ c_3 \sum_{\ell=0}^L 
\log(\MMM_\ell)^{\mu_1}\MMM_\ell^{-\mu_2} \varphi(\triangle u_\ell) \leq \frac{\varepsilon}{2}. 
\end{equation}
Here we assume that $\frac{\varepsilon}{2}\leq \varphi(v_0)$.
We form the Lagrangian 
\[
\mathcal L(\MMM_0,...,\MMM_L;\lambda): = \sum_{\ell=0}^L \mathcal C_\ell \MMM_\ell + \lambda \left(c_3\sum_{\ell=0}^{L}\log(\MMM_\ell)^{\mu_1} \MMM_\ell^{-\mu_2}\varphi(\triangle u_\ell) - \frac{\varepsilon}{2}\right),
\]
whose stationary points satisfy
\begin{equation*}
\mathcal C_\ell + c_3 \lambda \varphi(\triangle u_\ell)\left(-\mu_2 \MMM_\ell^{-(\mu_2+1)}\log(\MMM_\ell)^{\mu_1} + \mu_1\log(\MMM_\ell)^{\mu_1-1}\MMM_\ell^{-(\mu_2+1)}\right)=0
\end{equation*}
and hence
\begin{equation}
\left( \mu_2 - \frac{\mu_1}{\log(\MMM_\ell)}\right)\MMM_\ell^{-(\mu_2+1)} \log(\MMM_\ell)^{\mu_1} = \frac{\mathcal{C}_\ell}{\lambda c_3 \varphi(\triangle u_\ell)}.
\end{equation}
In order to obtain an idea of what $\lambda$ should be, we ignore the one term consider the approximation
\begin{equation}\label{eqn:optimization_log_heuristic}
\MMM_\ell^{-(\mu_2+1)}\log(\MMM_\ell)^{\mu_1} \approx \frac{\mathcal C_\ell}{c_3\lambda \varphi(\triangle u_\ell)}.
\end{equation}
We now choose $\lambda > 0$ to ensure
\begin{equation}\label{eqn: lambda condition}
\sum_{\ell = 0}^L c_3 \varphi(\triangle u_\ell) \left(\frac{\mathcal C_\ell}{\lambda c_3\varphi(\triangle u_\ell)}\right)^{\frac{\mu_2}{\mu_2+1}} = \frac{\varepsilon}{2},
\end{equation}
i.e.
\begin{equation}\label{eqn: lambda definition}
\lambda = \left(\frac{2}{\varepsilon}\sum_{\ell=0}^L (c_3 \mathcal C_\ell^{\mu_2}\varphi(\triangle u_\ell))^{\frac{1}{\mu_2+1}} \right)^{\frac{\mu_2+1}{\mu_2}}.
\end{equation}
Note that
\[
\frac{\mathcal C_\ell}{\lambda c_3 \varphi(\triangle u_\ell)} < 1.
\]
If this were not the case, then \eqref{eqn: lambda condition} would imply 
\begin{align*}
\frac{\varepsilon}{2} &= 
\sum_{\ell = 0}^L c_3 \varphi(\triangle u_\ell) \left(\frac{\mathcal C_\ell}{\lambda c_3\varphi(\triangle u_\ell)}\right)^{\frac{\mu_2}{\mu_2+1}} \geq \sum_{\ell = 0}^L \varphi(\triangle u_\ell)
\end{align*}
(recall that we have assumed $c_3\geq 1$ w.l.o.g.) and hence $\varphi(\triangle u_\ell)<\frac{\varepsilon}{2}$ for all $\ell = 0,...,L$. In particular, $\varphi(u_0)\leq \frac{\varepsilon}{2}$, which is impossible by assumption. Inspired by Lemma \ref{lemma: epsilon cost for logs}, we now choose the sample sizes $\{\MMM_\ell\}_{\ell=0}^L$ to be%
\begin{equation} \label{eqn:optimization_log_opt_sample}
\MMM_\ell = \left\lceil\left(\frac{K_1\mathcal C_\ell}{\lambda c_3\|\triangle u_\ell\|}\right)^{-\frac{1}{\mu_2+1}}\log\left(\left(\frac{K_1\mathcal C_\ell}{\lambda c_3 \varphi(\triangle u_\ell)}\right)^{-1}\right)^{\frac{\mu_1}{\mu_2}}\right\rceil,
\end{equation}%
where $K_1$ is the scaling factor given in \eqref{eqn: smaller epsilon} and apply Lemma \ref{lemma: epsilon cost for logs} to conclude%
\begin{equation}
\MMM_\ell^{-\mu_2} \log(\MMM_\ell)^{\mu_1} \leq \left(\frac{\mathcal C_\ell}{\lambda c_3 \varphi(\triangle u_\ell)}\right)^{\frac{\mu_2}{\mu_2+1}}. 
\end{equation}%
The total multilevel sampling error can now be bounded by%
\begin{equation}
c_3\sum_{\ell=0}^L \MMM_\ell^{-\mu_2}\log(\MMM_\ell)^{\mu_1} \varphi(\triangle u_\ell)\leq c_3\sum_{\ell=0}^L \varphi(\triangle u_\ell) \left(\frac{\mathcal C_\ell}{\lambda c_3 \varphi(\triangle u_\ell)}\right)^{\frac{\mu_2}{\mu_2+1}} = \frac{\varepsilon}{2},
\end{equation}%
according to \eqref{eqn: lambda condition}. Substituting the expressions for $\{\MMM_\ell\}_{\ell=0}^L$ into the total cost then  gives%
\begin{align}
&\sum_{\ell = 0}^L \MMM_\ell \mathcal C_\ell
\leq
\sum_{\ell = 0}^L \mathcal C_\ell \left(\left(\frac{K_1\mathcal C_\ell}{\lambda c_3 \varphi(\triangle u_\ell}\right)^{-\frac{1}{\mu_2+1}}\log\left(\left(\frac{K_1\mathcal C_\ell}{\lambda c_3 \varphi(\triangle u_\ell)}\right)^{-1}\right)^{\frac{\mu_1}{\mu_2}}+1\right) \nonumber\\
= &
\left(\frac{c_3}{K_1}\right)^{\frac{1}{\mu_2+1}}\lambda^{\frac{1}{\mu_2+1}}\sum_{\ell = 0}^L \left((\mathcal C_\ell^{\mu_2}\varphi(\triangle u_\ell))^{\frac{1}{\mu_2+1}} \log\left(\left(\frac{K_1\mathcal C_\ell}{\lambda c_3 \varphi(\triangle u_\ell)}\right)^{-1}\right)^{\frac{\mu_1}{\mu_2}}\right) + \sum_{\ell=0}^L \mathcal C_\ell. \label{eqn:optimization_log_total_cost} 
\end{align}

In order to make use of formulae \eqref{eqn:optimization_alg_opt_sample} and \eqref{eqn:optimization_log_opt_sample} in Algorithm \ref{alg:basic_ml}, the sample sizes $\MMM_0,...,\MMM_L$ must first be rounded up, either to the nearest integer in the case of Monte Carlo sampling, or to the size of the sparse grid on the next refinement level $\nu$ in the case of sparse grid stochastic collocation. Since the number of additional sample points needed for the latter sampling scheme grows increasingly with increasing $\nu$, especially in high dimensions $N$, this `binning' could add needlessly to the cost. Let $\MMM^{\mathrm{next}}_0,...,\MMM_L^{\mathrm{next}}$ be the sample sizes on the next stochastic refinement level $\nu$ and $\MMM_0^{\mathrm{prev}},...,\MMM_L^{\mathrm{prev}}$ be those on the previous level $\nu-1$. The effect of `binning' can be mitigated by sorting $\{\MMM_\ell\}_{\ell=0}^{L}$ in ascending order according to the cost $(\MMM_\ell^{\mathrm{next}} - \MMM_\ell^{\mathrm{prev}})\C_\ell$ and rounding up the $\MMM_\ell$'s with lowest cost incrementally, while rounding down the others until the sampling error estimate is within tolerance.

\vspace{1em}
 
The derivations for the optimal sample sizes $\MMM_1,...,\MMM_L$ are based on the approximation of problems \eqref{eqn:optimization_alg} and \eqref{eqn:optimization_log} by their continuous counterparts, as well as other, heuristic approximations, such as \eqref{eqn:optimization_log_heuristic}. In order to to show that the multilevel algorithm leads to an improvement in efficiency over related single level methods, we need to determine its $\varepsilon$-cost. Theorem \ref{thm:multilevel_efficiency} accomplishes this. Its proof hinges on the fact that \[
\varphi(\triangle u_\ell)\leq c_4 h_\ell^\beta
\] 
for some $\beta>0$ and $c_4 \geq 1$. Therefore the sampling error for numerical integration of the correction terms $\triangle u_\ell$, decreases as the spatial refinement level $\ell$ increases. If the finite element approximation converges in mean square, this condition can easily be shown to hold for Monte Carlo sampling, but it requires a proof for Lagrange interpolation, when $\varphi(\cdot) =\|\cdot\|_{\mix,k}$. The following lemma shows that under the stricter regularity Assumption \ref{ass:coefficient_regularity_more} and under piecewise linear finite element approximation, such estimates are also possible in this case.  
\begin{assumption}\label{ass:coefficient_regularity_more}
Assume that $a(\vecy)\in C^1(D), f(\vecy)\in L^2(D)$ a.e. on $\Gamma$ and that
\begin{equation*}
\|\partial_{y_n}^k a(\vecy)\|_{L^\infty(D)}\leq \frac{\sqrt{a_{\min}}}{C_{\mathrm{reg}}} \left(\frac{\theta_n}{8}\right)^k k! \ \ \text{and}\ \ \|\partial_{y_n}^k\nabla a(\vecy)\|_{L^\infty(D)}\leq \sqrt{a_{\min}} \left(\frac{\theta_n}{8}\right)^k k!,
\end{equation*}
while 
\begin{equation*}
\|\partial_{y_n}^k f(\vecy)\|_{L^2(D)} \leq \frac{a_{\min}}{C_\mathrm{P}}(1+\|f(\vecy)\|_{L^2(D)})\left(\frac{\theta_n}{4}\right)^k k!
\end{equation*} 
where $a_{\min}\leq \sqrt{a_{\min}}< 1$ w.l.o.g., and $C_{\mathrm{reg}} \geq 1$ is a constant related to the spatial regularity of $u$ and $C_{\mathrm{P}}\geq 1$ is a Poincar\'e constant.
\end{assumption}

\begin{lemma}
Suppose the parameters $a$ and $f$ appearing in the elliptic equation \eqref{eqn:elliptic_weak_y} satisfy Assumption \eqref{ass:coefficient_regularity_more} and also that $h_\ell \leq C_{\mathrm{refine}} h_{\ell-1}$ for $\ell = 0,...,L$. Then there exists a constant $c_4\geq 1$ so that
\begin{align*}
\|\triangle u_\ell\|_{\mix,k}&\leq c_4 h_{\ell} \ \ \text{for } k\in \N\cup\{\infty\}, \ \ell = 1,2,...
\end{align*} 
\end{lemma}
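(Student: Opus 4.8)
The plan is to differentiate both the elliptic problem \eqref{eqn:elliptic_weak_y} and its finite element discretization with respect to the parameter vector $\vecy$, thereby reducing the claim to a uniform-in-$\vecy$ bound on the finite element error of every mixed parameter derivative of the exact solution $u$, and then to obtain the estimate for $\triangle u_\ell$ by comparing $u_{h_\ell}$ and $u_{h_{\ell-1}}$ with $u$ via the triangle inequality. Concretely, fix a multi-index $s=(s_1,\dots,s_N)$ with $s\le k$ componentwise, write $\partial^s:=\partial_{y_1}^{s_1}\cdots\partial_{y_N}^{s_N}$, and apply $\partial^s$ to \eqref{eqn:elliptic_weak_y}; since $V_h(D)$ does not depend on $\vecy$, the same applies to the Galerkin equations. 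Using the Leibniz rule, $\partial^s u(\vecy)$ solves the elliptic boundary value problem with coefficient $a(\vecy)$ and right-hand side $\tilde f_s(\vecy):=\partial^s f(\vecy)+\sum_{0<s'\le s}\binom{s}{s'}\nabla\cdot(\partial^{s'}a(\vecy)\nabla\partial^{s-s'}u(\vecy))$, while $\partial^s u_h(\vecy)$ (which exists because $u_h\in C_{\mix}^k(\Gamma,H^1(D))$ under Assumption \ref{ass:coefficient_regularity_more}) satisfies the corresponding Galerkin identity; subtracting, the error $e^s(\vecy):=\partial^s u(\vecy)-\partial^s u_h(\vecy)$ obeys, for all $w_h\in V_h(D)$,
\[
\int_D a(\vecy)\nabla e^s(\vecy)\cdot\nabla w_h\,dx=-\sum_{0<s'\le s}\binom{s}{s'}\int_D \partial^{s'}a(\vecy)\,\nabla e^{s-s'}(\vecy)\cdot\nabla w_h\,dx .
\]

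Next I would show $\sup_{\vecy\in\Gamma}\|e^s(\vecy)\|_{H^1(D)}\le C_s h$ by induction on $|s|$. The case $|s|=0$ is \eqref{eqn:fem_error_ptwise} with $r=1$ together with $\|u(\vecy)\|_{H^2}\le C_{\mathrm{reg}}\|f(\vecy)\|_{L^2}$. For the inductive step, split $e^s=(\partial^s u-I_h\partial^s u)+\eta_h$ with $I_h$ the piecewise linear nodal interpolant and $\eta_h\in V_h(D)$, test the error equation with $w_h=\eta_h$, and use coercivity and the boundedness of $a(\vecy)$ and of $\partial^{s'}a(\vecy)$ from Assumption \ref{ass:coefficient_regularity_more} to get $\|\eta_h\|_{H^1}\lesssim\|\partial^s u-I_h\partial^s u\|_{H^1}+\sum_{0<s'\le s}\binom{s}{s'}\|\partial^{s'}a(\vecy)\|_{L^\infty}\|e^{s-s'}(\vecy)\|_{H^1}$; the first term is $\le ch\|\partial^s u(\vecy)\|_{H^2}$ and each remaining term is $O(h)$ by the inductive hypothesis, so $\|e^s(\vecy)\|_{H^1}\le C_s h$. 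The uniform bound $\|\partial^s u(\vecy)\|_{H^2}\le C_s$ needed here is precisely the factorial \emph{a priori} estimate of \cite{Babuska2007} (Lemma 3.2): elliptic regularity applied to the $\partial^s u(\vecy)$-problem yields $\|\partial^s u(\vecy)\|_{H^2}\le C_{\mathrm{reg}}\|\tilde f_s(\vecy)\|_{L^2}$, and the weights in Assumption \ref{ass:coefficient_regularity_more} are calibrated so that this recursion closes with constants bounded uniformly for $s\le k$.

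To finish, note $\partial^s\triangle u_\ell(\vecy)=e^s_{\ell-1}(\vecy)-e^s_\ell(\vecy)$, so $\|\partial^s\triangle u_\ell(\vecy)\|_{H^1}\le C_s(h_\ell+h_{\ell-1})\le C_s(1+C_{\mathrm{refine}})h_\ell$ using the mesh-comparability hypothesis in the form $h_{\ell-1}\le C_{\mathrm{refine}}h_\ell$; taking the maximum over $\vecy\in\Gamma$ and over $s\le k$ gives $\|\triangle u_\ell\|_{\mix,k}\le c_4 h_\ell$ with $c_4:=(1+C_{\mathrm{refine}})\max_{s\le k}C_s$. For $k=\infty$ one repeats the argument with $y_n$ replaced by a complex variable $z\in\Sigma(\Gamma_n;\tau_n)$: the factors $\theta_n/8$, $\theta_n/4$ and $\sqrt{a_{\min}}/C_{\mathrm{reg}}$ in Assumption \ref{ass:coefficient_regularity_more} ensure that $a$ and $f$ extend holomorphically in $z$ and that $\mathrm{Re}\,a(z)$ stays bounded below by a positive constant, so the complexified bilinear form remains coercive; Lax--Milgram, C\'ea's lemma and a complex Aubin--Nitsche duality argument then give $\sup_{z\in\Sigma(\Gamma_n;\tau_n)}\|u(z)-u_h(z)\|_{H^1}\le ch$ just as above, whence $\|\triangle u_\ell\|_{\mix,\infty}\le c_4 h_\ell$.

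The main obstacle is the uniform $H^2$-regularity bound on all mixed parameter derivatives of $u$ (and, for $k=\infty$, its analytic extension with uniformly bounded $H^2$-norm on the strips), since this is where the specific constants in Assumption \ref{ass:coefficient_regularity_more} must be exploited to close the factorial recursion and, simultaneously, to keep the complexified problem coercive. Everything else is a routine combination of the Leibniz rule, C\'ea/Aubin--Nitsche estimates, and induction on $|s|$.
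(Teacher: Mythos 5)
Your proposal is correct and follows essentially the same route as the paper: differentiate the continuous and Galerkin weak forms in the parameter, derive a recursion for the $H^1$ finite element error of each parameter derivative, close the factorial recursion using the calibrated constants of Assumption \ref{ass:coefficient_regularity_more} together with recursive elliptic $H^2$ bounds, and finish with the triangle inequality $\|\partial^s\triangle u_\ell\|\leq\|\partial^s(u_{h_\ell}-u)\|+\|\partial^s(u_{h_{\ell-1}}-u)\|$ and mesh comparability. The only substantive divergence is the $k=\infty$ case, where you complexify the bilinear form and rerun Lax--Milgram/C\'ea on the strip $\Sigma(\Gamma_n;\tau_n)$, whereas the paper instead sums the real-derivative bounds $\|\partial_{y_n}^k\triangle u_\ell\|_{H^1_0}\leq k!\,c_4(2\theta_n)^k h_\ell$ into a convergent Taylor series for $\tau_n<1/(2\theta_n)$ --- your variant is valid but requires the extra (and avoidable) verification that the complexified form stays coercive.
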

\begin{proof}
It was shown in \cite{Babuska2007} (Lemma 4.4) that for every $\vecy=(y_n,\vecy_n^*)\in \Gamma$, the $k^{th}$ derivatives $\partial^k_{y_n}u$, $k\in \N_0$, are well defined as solutions of the variational problem: 
\begin{equation}\label{eqn:var_prob_der}
B(\vecy;\partial_{y_n}^k u,w) = -\sum_{l=1}^k \partial_{y_n}^l B(\vecy;\partial_{y_n}^{k-l}u,w) + (\partial_{y_n}^k f(\vecy),w),\ \ \forall w\in H^1_0(D),
\end{equation}
where
\[
B(\vecy;u,w) = \int_D a(\vecy) \nabla u \cdot \nabla w \; dx, \ \text{ and } \ (f(\vecy),w) = \int_D f(\vecy) w\; d\bx, \ \ \forall u,w\in H^1_0(D).  
\]
Moreover, they can be used to define a power series expansion $u:\mathbb C\rightarrow C^0(\Gamma_n^*;H^1_0(D))$,
\[
u(\bx,z,\vecy_n^*) = \sum_{k=0}^\infty \frac{(z-y_n)^k}{k!}\partial_{y_n}^{k}u(x,y_n,\vecy_n^*)
\] 
that converges whenever $z\in \Sigma(\Gamma_n,\tau_n) = \{z\in \mathbb C: |z-y_n|\leq \tau_n <1/(2\theta_n)\}$. The same construction holds for the Galerkin projection $u_h$ of $u$, in which case the derivatives $\partial^k_{y_n} u_h$ satisfy \eqref{eqn:var_prob_der} on $W_h(D)\subset H^1_0(D)$. It then follows readily that $\triangle u_\ell$ has the power series expansion
\[
\triangle u_\ell(\bx,z,\vecy_n^*) = \sum_{k=0}^\infty \frac{(z-y_n)^k}{k!}\partial_{y_n}^{k}\triangle u_\ell(\bx,y_n,y_n^*), \ \ \forall |z-y_n|\leq \tau_n
\]
and that to estimate $\|\triangle u\|_{\mix,\infty}$ requires bounding the terms $\|\partial_{y_n}^k\triangle u_\ell(y)\|_{H^1_0}$ for $k\in \N_0$. Let $\left(\partial_{y_n}^k u\right)_h$ denote the Galerkin projection of $\partial_{y_n}^k u$ in \eqref{eqn:var_prob_der}, i.e.
\begin{equation}\label{eqn:var_prob_der_approx}
B(\vecy;\left(\partial_{y_n}^k u\right)_h,w)=-\sum_{l=1}^k {k \choose l}\partial_{y_n}^{l}B(\vecy;\partial_{y_n}^{k-l}u, w) + (f(\vecy),w), \ \ \forall w\in W_h(D).
\end{equation}
The  approximation error $\|\partial_{y_n}^k u-\partial_{y_n}^k u_{h}\|_{H^1_0}$ for a generic spatial discretization level $h>0$ can be decomposed into   
\[
\|\partial_{y_n}^k (u-u_h)\|_{H^1_0}\leq \|\partial_{y_n}^k u - \left(\partial_{y_n}^k u\right)_h\|_{H_0^1}+\|\left(\partial_{y_n}^ku\right)_h - \partial_{y_n}^k u_h\|_{H_0^1}.
\]
Moreover, equations \eqref{eqn:var_prob_der} and \eqref{eqn:var_prob_der_approx} imply
\begin{align}
& a_{\min}\|\left(\partial_{y_n}^ku\right)_h - \partial_{y_n}^k u_h)\|_{H^1_0}^2
= - \sum_{l=1}^k {k\choose l}\partial_{y_n}^l B(\vecy;\partial_{y_n}^{k-l}(u-u_h), \left(\partial_{y_n}^ku\right)_h - \partial_{y_n}^k u_h)\nonumber\\
\leq & \sum_{l=1}^k {k\choose l}\|\partial_{y_n}^l a(\vecy)\|_{L^\infty(D)} \|\partial_{y_n}^{k-l}(u-u_h)\|_{H_0^1(D)}\|\left(\partial_{y_n}^ku\right)_h - \partial_{y_n}^k u_h)\|_{H_0^1} \label{eqn:der_of_galerkin_approx_galerkin_of_der}
\end{align}
On the other hand, it follows readily from C\'ea's Lemma and the appropriate finite element interpolation theorem (see e.g.\cite{Brenner2007}, Chapter 4) that
\begin{equation} \label{eqn:galerkin_of_der_approx_der}
\left\|\left(\partial_{y_n}^ku\right)_h - \partial_{y_n}^k u\right\|_{H^1_0(D)} \leq \frac{1}{\sqrt{a_{\min}}}\min_{w\in W_h(D)}\|\partial_{y_n}^k u - w\|_{H^1_0}\leq \frac{C_{\mathrm{mesh}}}{\sqrt{a_{\min}}} h \|\partial_{y_n}^k u\|_{H^2},
\end{equation}
where the constant $C_{\mathrm{mesh}}>0$ depends only on the triangulation $\mathcal T_h$.
Combining estimates \eqref{eqn:der_of_galerkin_approx_galerkin_of_der} and \eqref{eqn:galerkin_of_der_approx_der} then gives the recursively defined error estimate
\begin{equation}\label{eqn:recursive_der_of_galerkin}
\|\partial_{y_n}^k (u-u_h)\|_{H^1_0}\leq \frac{1}{a_{\min}}\sum_{l=1}^k {k\choose l}\|\partial_{y_n}^l a(\vecy)\|_{L^\infty} \|\partial_{y_n}^{k-l}(u-u_h)\|_{H_0^1}+\frac{C_{\mathrm{mesh}}}{\sqrt{a_{\min}}} h \|\partial_{y_n}^k u\|_{H^2}.
\end{equation}
We turn first to the norm $\|\partial_{y_n}^k u\|_{H^2(D)}$. Since $a(\vecy)\in C^1(D)$, $f(\vecy)\in L^2(D)$ and $\partial D \in C^2$, elliptic regularity theory asserts that $\|u\|_{H^2(D)} \leq C_{\mathrm{reg}} \|f(\vecy)\|_{L^2(D)}$ for an appropriate constant $C_{\mathrm{reg}}>0$ that is independent of $u$ and $f$. To bound the $H^2$-norms of the higher order  derivatives $\partial_{y_n}^k u$, $k\in\N$, we proceed inductively. Suppose $\|\partial_{y_n}^{k-l} u\|_{H^2}<\infty$ for $l=1,...,k$. Then the right hand side of \eqref{eqn:var_prob_der} can be rewritten as  
\begin{align*}
 - &\sum_{l=1}^k {k\choose l} \partial_{y_n}^l B(\vecy;\partial_{y_n}^{k-l} u,w)+ (\partial_{y_n}^k f(\vecy), w)\\
 =& \int_D \left(\sum_{l=1}^k {k\choose l} 
\left( \partial_{y_n}^l\nabla a(\vecy)\cdot \nabla \partial_{y_n}^{k-l}u + \partial_{y_n}^la(\vecy) \Delta \partial_{y_n}^{k-l} u \right) + \partial_{y_n}^k f(\vecy)\right) w \;d\bx, 
\end{align*}
through integration by parts. Moreover 
\begin{align*}
&\left\|\sum_{l=1}^k {k\choose l}  \partial_{y_n}^l\nabla a(\vecy)\cdot \nabla \partial_{y_n}^{k-l}u + \partial_{y_n}^la(\vecy) \Delta \partial_{y_n}^{k-l} u  + \partial_{y_n}^k f(\vecy) \right\|_{L^2}\\
\leq & \sum_{l=1}^k {k \choose l}\left(\|\partial_{y_n}^l\nabla a(\vecy)\|_{L^\infty}\| \partial_{y_n}^{k-l}u\|_{H^1_0} + \|\partial_{y_n}^l a(\vecy)\|_{L^\infty}\|\partial_{y_n}^{k-l}u\|_{H^2}\right) + \|\partial_{y_n}^k f(\vecy)\|_{L^2}<\infty,
\end{align*} 
and hence by regularity
\begin{align}
\|\partial_{y_n}^k u\|_{H^2}\leq & C_{\mathrm{reg}} \sum_{l=1}^k {k \choose l}\|\partial_{y_n}^l a(\vecy)\|_{L^\infty}\| \partial_{y_n}^{k-l}u\|_{H^2}\ + \label{eqn:recursive_h2_part1}\\
 & C_{\mathrm{reg}}  \sum_{l=1}^k {k \choose l}\|\partial_{y_n}^l\nabla a(\vecy)\|_{L^\infty} \|\partial_{y_n}^{k-l}u\|_{H^1_0} + \|\partial_{y_n}^k f(y)\|_{L^2}.\label{eqn:recursive_h2_part2},
\end{align}
where $\|\partial_{y_n}^k u\|_{H_0^1}$ can be shown to satisfy
\begin{equation}\label{eqn:recursive_h10}
\|\partial_{y_n}^k u\|_{H_0^1}\leq \sum_{l=1}^k {k\choose l}\frac{\|\partial_{y_n}^l a(\vecy)\|_{L^\infty}}{\sqrt{a_{\min}}}\|\partial_{y_n}^{k-l}u\|_{H_0^1}+ \frac{C_\mathrm{P}}{a_{\min}}\|\partial_{y_n}^k f(\vecy)\|_{L^2},
\end{equation}
by virtue of \eqref{eqn:var_prob_der}, where $C_\mathrm{P}>0$ is the appropriate Poincar\'e constant. Note that both \eqref{eqn:recursive_der_of_galerkin} and \eqref{eqn:recursive_h2_part2}, as well as \eqref{eqn:recursive_h10} involve inequalities that are recursively defined. The following fact provides a means by which such inequalities can be resolved and is used repeatedly in sequel. Let $c,\theta>0$ be constants and $R_0,R_1,...$ a sequence of numbers. If, for $k=1,2,...$, $R_k$ satisfies
\begin{equation}\label{eqn:recursive_formula}
R_k \leq \sum_{l=1}^k \theta^l R_{k-l} + \theta^k c  \ \ \ \text{then}\ \  R_k\leq \sum_{l=1}^k \theta^l R_{k-l} + \theta^k c \leq\frac{1}{2}(2\theta)^k(R_0+c).
\end{equation}
Since Assumption \ref{ass:coefficient_regularity_more} implies $\|\partial_{y_n}^k a(\vecy)\|_{L^\infty}\leq \sqrt{a_{\min}} (\theta_n/4)^k k!$ \\
and $\|\partial_{y_n}^k f(y)\|_{L^2}\leq (1+\|f(\vecy)\|_{L^2})\min\{1,\frac{a_{\min}}{C_P}\}(\theta_n/4)^kk!$, inequality \eqref{eqn:recursive_h10} gives rise to
\begin{align*}
\frac{\|\partial_{y_n}^k u\|_{H_0^1}}{k!}&\leq \sum_{l=1}^k \left(\frac{\theta_n}{4}\right)^{l}\frac{\|\partial_{y_n}^{k-l}u\|_{H_0^1}}{(k-l)!} + \left(\frac{\theta_n}{4}\right)^k (1+\|f(y)\|_{L^2})\\
&\leq \left(\frac{\theta_n}{2}\right)^k \frac{1}{2}(\|u\|_{H^1_0} + 1 + \|f(y)\|_{L^2})
\end{align*}
while $\|\partial_{y_n}^k \nabla a(\vecy)\|_{L^\infty}\leq \frac{1}{C_{\mathrm{reg}}}(\theta_n/4)^k k!$, together with \eqref{eqn:recursive_formula} imply that expression \eqref{eqn:recursive_h2_part2} can also be bounded above by
\begin{equation}\label{eqn:recursive_inside_bound}
k!\sum_{l=1}^k \left(\frac{\theta_n}{4}\right)^{l}\frac{\|\partial_{y_n}^{k-l}u\|_{H_0^1}}{(k-l)!} + k!\left(\frac{\theta_n}{4}\right)^k (1+\|f(\vecy)\|_{L^2})\leq k!\left(\frac{\theta_n}{2}\right)^k \frac{1}{2}(\|u\|_{H^1_0} + 1 + \|f(\vecy)\|_{L^2}).
\end{equation}
Substituting \eqref{eqn:recursive_inside_bound} into \eqref{eqn:recursive_h2_part2} and noting 
$\|\partial_{y_n}^k a(\vecy)\|_{L^\infty}\leq \frac{1}{C_{\mathrm{reg}}}(\theta_n/2)^k k! $ yields
\begin{align}
\frac{\|\partial_{y_n}^k u\|_{H^2}}{k!}&\leq \sum_{l=1}^k \left(\frac{\theta_n}{2}\right)^l \frac{\|\partial_{y_n}^{k-l}u\|_{H^2}}{(k-l)!} +  \left(\frac{\theta_n}{2}\right)^k \frac{1}{2}(\|u\|_{H_0^1}+1+\|f(\vecy)\|_{L^2})\nonumber \\
&\leq \theta_n^k \left(\left(\frac{C_{\mathrm{reg}}}{2}+\frac{C_\mathrm{P}}{4 a_{\min}}+1\right)\|f(\vecy)\|_{L^2}+ 1 \right)\label{eqn:recursive_middle}.
\end{align}
Finally, noting that $\|\partial_{y_n}^k a(\vecy)\|_{L^\infty}\leq a_{\min}\theta_n^k k!$, substituting \eqref{eqn:recursive_middle} into \eqref{eqn:recursive_der_of_galerkin} and using \eqref{eqn:recursive_formula} gives 
\begin{align*}
\frac{\|\partial_{y_n}^k (u-u_h)\|_{H^1_0}}{k!}&\leq \sum_{l=1}^k \theta_n^k \frac{\|\partial_{y_n}^{k-l}(u-u_h)\|_{H_0^1}}{(k-l)!}+\theta_n^k h \tilde c_4\leq (2\theta_n)^k \frac{1}{2}(\tilde c_4 h + \|u-u_h\|_{H_0^1})\\
&\leq h (2\theta_n)^k \frac{1}{2}\left(\tilde c_4 + \frac{C_{\mathrm{mesh}}C_{\mathrm{reg}}}{\sqrt{a_{\min}}}\right),
\end{align*}
where $\tilde c_4 =\frac{C_{\mathrm{mesh}}}{\sqrt{a_{\min}}}.  \left(\left(\frac{C_{\mathrm{reg}}}{2}+\frac{C_\mathrm{P}}{4 a_{\min}}+1\right)\|f(\vecy)\|_{L^2}+ 1 \right)$. Consequently, 
\[
\|\partial_{y_n}^k \triangle u_\ell\|_{H_0^1}\leq \|\partial_{y_n}^k (u_{h_\ell}-u)\|_{H_0^1} + \|\partial_{y_n}^k (u_{h_{\ell-1}}-u)\|_{H_0^1} \leq k! c_4 (2\theta_n)^k h_\ell,
\]
where $c_4 = \frac{1+C_{\mathrm{refine}}}{2}\left(\tilde c_4 + \frac{C_{\mathrm{mesh}}C_{\mathrm{reg}}}{\sqrt{a_{\min}}}\right)$, and hence
\[
\|\triangle u_\ell\|_{\mix,k} = \max_{n = 1,...,N}\max_{y_n\in\Gamma_n}\max_{s_n\leq k}\|\partial_{y_n}^{s_n} \triangle u_\ell\|_{H^1_0} \leq k! c_4 (2\theta_n)^k h_\ell.
\]
For $k=\infty$,
\begin{align*}
\|\triangle u_\ell\|_{\mix,\infty} &:= \max_{n=1,...,N}|u|^{(n)}_{\mix,\infty}:=\max_{n=1,...,N}\max_{z\in \Sigma(\Gamma_n;\tau_n)} \|\triangle u_\ell(z)\|_{C^0(\Gamma_n^*;H^1_0)}\\
&\leq c_4 h_\ell \max_{n=1,...,N} \max_{z\in \Sigma(\Gamma_n,\tau_n)}\sum_{k=0}^\infty (2\theta_n |z-y_n|)^k.
\end{align*}
\end{proof}

\begin{theorem}[Efficiency of Multilevel Sampling Methods]\label{thm:multilevel_efficiency}
Suppose $h_\ell := h_0 s^{-\ell}$ and let the tolerance satisfy $0<\varepsilon<\min(2\varphi(v_0),1/e)$. Suppose further that there are constants $\alpha,\gamma,\mu_2, \beta > 0$, $\mu_1 \geq 0$, and $c_1,c_2,c_3,c_4 > 0$ so that
\begin{itemize}
\item[(A1)] $\|\QQQ - \QQQ_h\|_{\widetilde W}\leq c_1 h^\alpha$,
\item[(A2)] $\C_h \leq c_2 h^{-\gamma}$,
\item[(A3)] $\left\|\left(\QQQ_{h_\ell} - \QQQ_{h_{\ell-1}}\right) - \left(\QQQ_{M_\ell,h_\ell}^{\mathrm{SC}} -\QQQ_{M_\ell, h_{\ell-1}}^{\mathrm{SC}}\right)\right\|_{\widetilde W}\leq c_3 \log(M_\ell)^{\mu_1}M_\ell^{-\mu_2}\varphi(\triangle u_\ell)$, and 
\item[(A4)] $\varphi(\triangle u_\ell) \leq c_4 h_\ell^\beta$. 
\end{itemize}
We assume throughout that $\alpha < \gamma \mu_2$ and further, without loss of generality (w.l.o.g.), that $c_i\geq 1$ for $i=1,...,4$.
Then there exists an $L \in \N$ and $\{\MMM_\ell\}_{\ell = 0}^L\subset \N^L$ so that the resulting multilevel estimate $\Q_{\{\MMM_\ell\},\{h_\ell\}}^{\mathrm{ML}}$ approximates $Q$ with a total error of
\[
\|\QQQ- \QQQ_{\{\MMM_\ell\},\{h_\ell\}}^{\mathrm{MLSC}}\|\leq \varepsilon,
\]
while the total computational cost $\C(\QQQ_{\{\MMM_\ell\},\{h_\ell\}}^{\mathrm{MLSC}})$ satisfies
\begin{equation}\label{eqn: epsilon cost of multilevel quadrature}
\mathcal C(\QQQ_{\{\MMM_\ell\},\{h_\ell\}}^{\mathrm{MLSC}}) 
\leq 
\left\{ 
\begin{array}{ll}
d_1 \varepsilon^{-\frac{1}{\mu_2}-\frac{\gamma - \beta/\mu_2}{\alpha}} \log(\varepsilon^{-1})^{\frac{\mu_1}{\mu_2}}, & \text{if } \beta < \gamma \mu_2
\\
d_2 \varepsilon^{-\frac{1}{\mu_2}} \log(\varepsilon^{-1})^{1+\frac{\mu_1}{\mu_2}}, 
& \text{if } \beta = \gamma \mu_2
\\
d_3 \varepsilon^{-\frac{1}{\mu_2}}, 
& \text{if } \beta > \gamma \mu_2
\end{array}
\right., 
\end{equation}
where the constants $d_i$ may differ according to whether $\mu_1 = 0$ or $\mu_1 > 0$.
\end{theorem}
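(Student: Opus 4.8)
The plan is to prove the error and cost parts of the statement largely separately. Since \eqref{eqn:multilevel_error} already splits the total error into a purely spatial part and a multilevel sampling part, and since explicit sample sizes meeting the sampling tolerance were produced in \eqref{eqn:optimization_alg_opt_sample} (for $\mu_1=0$) and \eqref{eqn:optimization_log_opt_sample} (for $\mu_1>0$), the error bound is mostly bookkeeping. First I would take $L$ to be the least integer with $c_1 h_L^\alpha\le\varepsilon/2$; by (A1) this controls the spatial term of \eqref{eqn:multilevel_error}, and since the mesh hierarchy $h_\ell=h_0 s^{-\ell}$ is geometric, minimality of $L$ forces $h_L=\Theta(\varepsilon^{1/\alpha})$ and $L=O(\log\varepsilon^{-1})$, both used repeatedly below. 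With $L$ fixed I would take $\{M_\ell\}_{\ell=0}^L$ to be exactly the sample sizes of \eqref{eqn:optimization_alg_opt_sample} or \eqref{eqn:optimization_log_opt_sample} (the latter with the constant $K_1$ of \eqref{eqn: smaller epsilon}); it was already checked there, via (A3), Lemma~\ref{lemma: epsilon cost for logs}, and the inequality $\mathcal C_\ell/(\lambda c_3\varphi(\triangle u_\ell))<1$, that these drive the multilevel sampling error below $\varepsilon/2$, with the hypothesis $\varepsilon<2\varphi(v_0)$ legitimizing the construction in the $\mu_1>0$ branch, and the ceilings leave the $M_\ell$ as admissible positive integers. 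Summing the two estimates and invoking \eqref{eqn:multilevel_error} gives $\|\QQQ-\QQQ_{\{M_\ell\},\{h_\ell\}}^{\mathrm{MLSC}}\|_{\widetilde W}\le\varepsilon$.

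For the cost I would start from the closed forms \eqref{eqn:optimization_alg_total_cost} (when $\mu_1=0$) and \eqref{eqn:optimization_log_total_cost} (when $\mu_1>0$). After substituting $\mathcal C_\ell\le\mathcal C_{h_\ell}+\mathcal C_{h_{\ell-1}}=O(h_\ell^{-\gamma})$ from (A2) and $\varphi(\triangle u_\ell)\le c_4 h_\ell^\beta$ from (A4), the whole $\varepsilon$-dependence passes through the geometric sum $S:=\sum_{\ell=0}^L(\mathcal C_\ell^{\mu_2}\varphi(\triangle u_\ell))^{1/(\mu_2+1)}=O\!\left(\sum_{\ell=0}^L h_\ell^{(\beta-\gamma\mu_2)/(\mu_2+1)}\right)$, and the sign of $\beta-\gamma\mu_2$ produces the three regimes: for $\beta<\gamma\mu_2$ the exponent is negative, the sum is dominated by its last term, and $S=\Theta(\varepsilon^{(\beta-\gamma\mu_2)/(\alpha(\mu_2+1))})$; for $\beta=\gamma\mu_2$ each summand is $\Theta(1)$, so $S=\Theta(L)=\Theta(\log\varepsilon^{-1})$; for $\beta>\gamma\mu_2$ the series converges and $S=O(1)$. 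Substituting these through the factor $S^{(\mu_2+1)/\mu_2}$ in \eqref{eqn:optimization_alg_total_cost}, together with $h_L=\Theta(\varepsilon^{1/\alpha})$, gives the three bounds of \eqref{eqn: epsilon cost of multilevel quadrature} when $\mu_1=0$; in the first case the exponent is $-\frac{1}{\mu_2}+\frac{\beta-\gamma\mu_2}{\alpha\mu_2}=-\frac{1}{\mu_2}-\frac{\gamma-\beta/\mu_2}{\alpha}$, as claimed, and in the borderline case one verifies that the logarithmic power coming out of $S^{(\mu_2+1)/\mu_2}$ is the one stated. For $\mu_1>0$, \eqref{eqn:optimization_log_total_cost} additionally carries per-level factors $\log\!\bigl((K_1\mathcal C_\ell/(\lambda c_3\varphi(\triangle u_\ell)))^{-1}\bigr)^{\mu_1/\mu_2}$ and a trailing $\sum_\ell\mathcal C_\ell$; I would bound the argument of that logarithm uniformly in $\ell$ by a fixed power of $\varepsilon^{-1}$ (using \eqref{eqn: lambda definition}, $\varphi(\triangle u_\ell)\le c_4 h_0^\beta$, $\mathcal C_\ell\ge1$, and the at-worst-polylogarithmic size of $S$), so that each such factor is $O(\log(\varepsilon^{-1})^{\mu_1/\mu_2})$ and pulls out of the sum, while the trailing $\sum_\ell\mathcal C_\ell=O(\varepsilon^{-\gamma/\alpha})$ from the ceilings is dominated by $\sum_\ell M_\ell\mathcal C_\ell$ under the standing hypotheses since each $M_\ell\ge1$. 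The $\ell=0$ term, where $\triangle u_0=u_{h_0}$ is not a difference, is handled by the crude estimate displayed just before Assumption~\ref{ass:G frechet} and fits the same template; and the assumption $\alpha<\gamma\mu_2$ is what renders the resulting exponents a genuine improvement over the single-level $\varepsilon$-cost of Section~\ref{section:efficiency_sampling}.

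I expect the main obstacle to be the $\mu_1>0$ bookkeeping, specifically the uniform control of the nested logarithm $\log\!\bigl(\lambda c_3\varphi(\triangle u_\ell)/(K_1\mathcal C_\ell)\bigr)$, which must be bounded above by $O(\log\varepsilon^{-1})$ so that the extra factor is only polylogarithmic; this forces one to track $\lambda$, $\varphi(\triangle u_\ell)$ and $\mathcal C_\ell$ simultaneously. The borderline regime $\beta=\gamma\mu_2$ is the tightest point, because there $S=\Theta(L)$ feeds the $L$-dependence back into $\lambda$ through \eqref{eqn: lambda definition} and hence into those logarithms, so one must check that the accumulated logarithmic powers — coming jointly from $S^{(\mu_2+1)/\mu_2}$ and from the $\log(\cdot)^{\mu_1/\mu_2}$ factors — agree exactly with the exponents in \eqref{eqn: epsilon cost of multilevel quadrature}. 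Everything else (the geometric summations, the substitution of $h_L=\Theta(\varepsilon^{1/\alpha})$ and $L=O(\log\varepsilon^{-1})$, and the verification that the finest-level and rounding contributions do not dominate) is routine.
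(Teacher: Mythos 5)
Your proposal follows essentially the same route as the paper's proof: the same minimal $L$ with $c_1h_L^\alpha\le\varepsilon/2$ giving $h_L=\Theta(\varepsilon^{1/\alpha})$, the same sample sizes \eqref{eqn:optimization_alg_opt_sample} and \eqref{eqn:optimization_log_opt_sample}, the same reduction of the cost to the geometric sum $\sum_{\ell}(\mathcal C_\ell^{\mu_2}\varphi(\triangle u_\ell))^{1/(\mu_2+1)}$ with the trichotomy on the sign of $\beta-\gamma\mu_2$, and the same uniform polylogarithmic control of the nested logarithms and trailing $\sum_\ell\mathcal C_\ell$ term in the $\mu_1>0$ branch. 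The only soft spot — your justification that $\sum_\ell\mathcal C_\ell$ is absorbed because $M_\ell\ge1$ is circular as stated — mirrors the paper's own (equally terse) use of $\alpha<\gamma\mu_2$ at that step, so the argument is in substance the paper's.
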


\begin{proof}
We first choose the maximum spatial refinement level $L$ large enough to ensure that the spatial approximation error satisfies
\[
\|\QQQ-\QQQ_{h_L}]\|_{\widetilde W}\leq \frac{\varepsilon}{2}.
\]
Under Assumption (A1), it suffices to take $L$ to be the smallest integer for which 
\[c_1 h_L^{\alpha} = c_1 \left(h_0 s^{-L}\right)^{\alpha}\leq \frac{\varepsilon}{2},\]
or equivalently letting $L = \left\lceil \frac{\log(2c_1h_0^{\alpha}\varepsilon^{-1})}{\alpha \log(s)}\right\rceil$, which implies
\begin{equation}\label{eqn: bounds for L}
\frac{\log(2c_1h_0^{\alpha}\varepsilon^{-1})}{\alpha \log(s)} \leq L < \frac{\log(2c_1h_0^{\alpha}\varepsilon^{-1})}{\alpha \log(s)} + 1 = \frac{\log(2c_1(h_0 s)^{\alpha}\varepsilon^{-1})}{\alpha \log(s)}.
\end{equation}
As a direct consequence,
\begin{equation}\label{eqn: bounds for 2^L}
h_0(2c_1)^{\frac{1}{\alpha}}\varepsilon^{-\frac{1}{\alpha}}\leq s^L < s h_0(2c_1)^{\frac{1}{\alpha}} \varepsilon^{-\frac{1}{\alpha}}. 
\end{equation}
We now show that choices \eqref{eqn:optimization_alg_opt_sample} and \eqref{eqn:optimization_log_opt_sample} of sample sizes have the advertised computational cost. 
As before, we first consider the multilevel sampling scheme for which the sampling error contains no logarithmic term. Recall that the total cost \eqref{eqn:optimization_alg_total_cost} associated with formula \eqref{eqn:optimization_alg_opt_sample} satisfies
\[
\sum_{\ell=0}^L \C_\ell \MMM_\ell = (2c_3\varepsilon^{-1})^\frac{1}{\mu_2} \left(\sum_{\ell =0}^L (\C_{\ell}^{\mu_2} \varphi(\triangle u_{\ell}))^\frac{1}{\mu_2+1}\right)^{\frac{\mu_2+1}{\mu_2}}.
\]
Seeing that the sum $\sum_{\ell=0}^L (\mathcal C_\ell^{\mu_2} \varphi(\triangle u_\ell))^{\frac{1}{\mu_2+1}}$ appears frequently in sequel, it is useful to first estimate its upper bound in terms of $\varepsilon$. Under Assumptions (A2) and (A4),   
\begin{align}
\sum_{\ell = 0}^L (\mathcal C_\ell^{\mu_2} \varphi(\triangle u_\ell))^\frac{1}{\mu_2+1} 
&\leq
(c_2^{\mu_2}c_4)^{\frac{1}{\mu_2 + 1}}\sum_{\ell = 0}^L h_\ell^{\frac{\beta-\mu_2\gamma}{\mu_2+1}}\nonumber \\
&= 
(c_2^{\mu_2}c_4 h_0^{\beta-\mu_2\gamma})^{\frac{1}{\mu_2 + 1}} \sum_{\ell = 0}^{L} s^{-\frac{(\beta - \mu_2\gamma)}{\mu_2+1}\ell}\label{eqn: geom series bound initial}.
\end{align}
The upper bound for the geometric series $\sum_{\ell = 0}^{L} s^{-\frac{(\beta - \mu_2\gamma)}{\mu_2+1}\ell}$ depends on the sign of the quantity $\beta - \gamma \mu_2$ and we therefore treat each case separately.%
\begin{description}
\item[Case 1: $\beta < \gamma \mu_2$.] When the growth in the cost outweighs the decay of the correction terms, then the terms $s^{-\frac{\beta-\gamma\mu_2}{\mu_2+1}\ell}$ are increasing with $\ell$. We can now use inequality \eqref{eqn: bounds for 2^L} to bound the geometric series by%
\begin{align}
\sum_{\ell =0}^L s^{-\frac{\beta-\gamma \mu_2}{\mu_2+1}\ell} 
&= 
\frac{s^{\frac{\gamma \mu_2- \beta}{\mu_2+1}L}-1}{s^{\frac{\gamma \mu_2-\beta}{\mu_2+1}}-1} 
=
\frac{s^{\frac{\gamma\mu_2-\beta}{\mu_2}L}}{s^{\frac{\gamma\mu_2-\beta}{\mu_2+1}}} \left( \frac{1-s^{-\frac{\gamma \mu_2-\beta}{\mu_2+1}L}}{1-s^{-\frac{\gamma \mu_2-\beta}{\mu_2+1}}}\right) \nonumber \\
&\leq  
\frac{s^{\frac{\gamma\mu_2-\beta}{\mu_2+1}L}}{s^{\frac{\gamma\mu_2-\beta}{\mu_2+1}}} \left( \frac{1-s^{-\frac{\gamma \mu_2-\beta}{\mu_2+1}L}}{1-s^{-\frac{\gamma \mu_2-\beta}{\mu_2+1}L}}\right)= s^{\frac{\gamma\mu_2-\beta}{\mu_2+1}(L-1)} \nonumber \\
&\leq 
(2c_1h_0^\alpha\varepsilon^{-1})^{\frac{\gamma\mu_2-\beta}{\alpha(\mu_2+1)}}
= 
(2c_1h_0^\alpha)^{\frac{\gamma\mu_2-\beta}{\alpha(\mu_2+1)}}\varepsilon^{-\frac{\gamma\mu_2-\beta}{\alpha(\mu_2+1)}}.\label{eqn: geom series bound beta < gamma mu}
\end{align}
\item[Case 2: $\beta = \gamma \mu$.] In this case%
\begin{align}
\sum_{\ell =0}^L s^{-\frac{\beta-\gamma \mu_2}{\mu_2+1}\ell} &= (L + 1)\leq \frac{1}{\alpha\log(s)}\log(2c_1(h_0s^2)^\alpha\varepsilon^{-1}) \nonumber \\
&\leq \frac{1+\log(2c_1(h_0 s^2)^{\alpha}}{\alpha\log(s)} \log(\varepsilon^{-1}),\label{eqn: geom series bound beta = gamma mu}
\end{align}
since $\varepsilon < \frac{1}{e}$.
\item[Case 3: $\beta > \gamma \mu_2 $. ] In this case the terms $s^{-\frac{\beta - \gamma \mu_2}{\mu_2+1}\ell}$ are decreasing with $\ell$, and therefore the geometric series has upper bound%
\begin{align}
\sum_{\ell =0}^L s^{-\frac{\beta-\gamma \mu_2}{\mu_2+1}\ell} =  \frac{1-s^{-\frac{\beta-\gamma \mu_2}{\mu_2+1}L}}{1-s^{-\frac{\beta-\gamma \mu_2}{\mu_2+1}}} < \frac{1}{1-s^{-\frac{\beta-\gamma \mu_2}{\mu_2+1}}}.\label{eqn: geom series bound beta > gamma mu}
\end{align}
\end{description}
Combining inequality \eqref{eqn: geom series bound initial} with estimates \eqref{eqn: geom series bound beta < gamma mu}, \eqref{eqn: geom series bound beta = gamma mu} and \eqref{eqn: geom series bound beta > gamma mu} respectively, we obtain
\begin{equation}\label{eqn: geom series bound final}
\sum_{\ell = 0}^L (\mathcal C_\ell^{\mu_2}\varphi(\triangle v_\ell)^{\frac{1}{\mu_2+1}})
\leq 
\left\{
\begin{array}{ll}
\tilde d_1\varepsilon^{-\frac{\gamma\mu_2-\beta}{\alpha(\mu_2+1)}} ,& \text{if } \beta < \gamma \mu_2\\
\tilde d_2\log(\varepsilon^{-1}),& \text{if } \beta = \gamma \mu_2\\
\tilde d_3 ,& \text{if } \beta > \gamma \mu_2
\end{array}
\right.,
\end{equation}
where 
\begin{align*}
\tilde d_1 
&= 
(c_2^{\mu_2}c_4 h_0^{\beta-\mu_2\gamma})^{\frac{1}{\mu_2 + 1}} \left((2c_1h_0^\alpha)^{\frac{\gamma\mu_2-\beta}{\alpha(\mu_2+1)}}\right)
\\
\tilde d_2 
& = (c_2^{\mu_2}c_4 h_0^{\beta-\mu_2\gamma})^{\frac{1}{\mu_2 + 1}} \left(\frac{1+\log(2c_1(h_0 s^2)^{\alpha}}{\alpha\log(s)} \right)
\\
\tilde d_3
& = (c_2^{\mu_2}c_4 h_0^{\beta-\mu_2\gamma})^{\frac{1}{\mu_2 + 1}}\left(\frac{1}{1-s^{-\frac{\beta-\gamma \mu_2}{\mu_2+1}}}\right).
\end{align*}
Substituting \eqref{eqn: geom series bound final} into the total cost \eqref{eqn:optimization_alg_total_cost} now yields
\begin{align}
\sum_{\ell=0}^L \C_\ell \MMM_\ell &\leq 
\left\{\begin{array}{ll} 
(2c_3\tilde d_1^{\mu_2+1})^{\frac{1}{\mu_2}}\varepsilon^{-\frac{1}{\mu_2}-\frac{\gamma-\frac{\beta}{\mu_2}}{\alpha}} , & \text{if } \beta < \gamma \mu_2\\
(2c_3\tilde d_2^{\mu_2+1})^{\frac{1}{\mu_2}}\varepsilon^{-\frac{1}{\mu_2}}\log(\varepsilon^{-1})^{\frac{\mu_2+1}{\mu_2}}, & \text{if } \beta = \gamma \mu_2\\ 
(2c_3\tilde d_3^{\mu_2+1})^{\frac{1}{\mu_2}}\varepsilon^{-\frac{1}{\mu_2}}, & \text{if } \beta > \gamma \mu_2
\end{array}
\right. 
\end{align}
Next, we consider the total cost when the sample sizes are chosen according to \eqref{eqn:optimization_log_opt_sample}, i.e. 
\begin{align*}
\sum_{\ell = 0}^L \MMM_\ell \mathcal C_\ell
\leq &
\left(\frac{c_3}{K_1}\right)^{\frac{1}{\mu_2+1}}\lambda^{\frac{1}{\mu_2+1}}\sum_{\ell = 0}^L \left((\mathcal C_\ell^{\mu_2}\varphi(\triangle v_\ell))^{\frac{1}{\mu_2+1}} \log\left(\left(\frac{K_1\mathcal C_\ell}{\lambda c_3 \varphi(\triangle v_\ell)}\right)^{-1}\right)^{\frac{\mu_1}{\mu_2}}+\C_\ell\right).
\end{align*}

The sum $\sum_{\ell=0}^L \mathcal C_\ell$ can readily be shown to have an upper bound similar to \eqref{eqn: geom series bound beta < gamma mu}. In fact, under Assumption (A2) 
\begin{align}
\sum_{\ell=0}^L \mathcal C_\ell \leq c_2 h_0^{-\gamma}\sum_{\ell=0}^L s^{\gamma\ell}\leq (2c_1h_0^{-\alpha})^{\frac{\gamma}{\alpha}} \varepsilon^{-\frac{\gamma}{\alpha}}<(2c_1h_0^{-\alpha})^{\frac{\gamma}{\alpha}}\varepsilon^{-\frac{1}{\mu_2}},\label{eqn: cost per sample sum bound}
\end{align} 
since $\alpha < \mu_2 \gamma$.
Consider the log term
\begin{align}
&\log\left(\left(\frac{K_1\mathcal C_\ell}{\lambda c_3 \varphi(\triangle u_\ell)}\right)^{-1}\right)^{\frac{\mu_1}{\mu_2}}\nonumber \\ = &
\log\left(\left(\frac{2}{\varepsilon}\sum_{\ell'=0}^L (c_3 \mathcal C_{\ell'}^{\mu_2}\|\triangle u_{\ell'}\|)^{\frac{1}{\mu_2+1}} \right)^{\frac{\mu_2+1}{\mu_2}}\left(\frac{c_3 \varphi(\triangle u_\ell)}{K_1\mathcal C_\ell}\right)\right)^{\frac{\mu_1}{\mu_2}}\nonumber \\
= &
\log\left(K_1^{-1}(2^{\mu_2+1}c_3^{\mu_1+1})^{\frac{1}{\mu_2}}\varepsilon^{-\frac{\mu_2+1}{\mu_2}}\left(\sum_{\ell'=0}^L (\mathcal C_{\ell'}^{\mu_2}\|\triangle u_{\ell'}\|)^{\frac{1}{\mu_2+1}} \right)^{\frac{\mu_2+1}{\mu_2}}\left(\frac{\varphi(\triangle u_\ell)}{\mathcal C_\ell}\right)\right)^{\frac{\mu_1}{\mu_2}}.\label{eqn: total cost bound log term 1}
\end{align}
Since the computational cost at the lowest spatial refinement level satisfies $\mathcal C_0 \leq C_\ell$ for $\ell>0$ it follows by virtue of Assumption (A2) that
\begin{align}
\frac{\varphi(\triangle u_\ell)}{\mathcal C_\ell}
&\leq
\frac{c_2 h_\ell^{\beta}}{C_0} 
=
\frac{c_2 h_0^{\beta}s^{-\beta
\ell}}{C_0} 
\leq
\frac{c_2 h_0^{\beta}}{C_0}.\label{eqn: total cost bound log term 2}
\end{align}
Moreover, according to \eqref{eqn: geom series bound final},
\begin{equation}\label{eqn: geom series one upper bound}
\sum_{\ell'=0}^L (\mathcal C_{\ell'}^{\mu_2}\|\triangle u_{\ell'}\|)^{\frac{1}{\mu_2+1}}\leq \max_{i=1,2,3}\{\tilde d_i\}\varepsilon^{-\max\{1,\frac{\gamma\mu_2-\beta}{\alpha(\mu_2+1)}\}}.
\end{equation}
Combining \eqref{eqn: geom series one upper bound} with \eqref{eqn: total cost bound log term 2} in \eqref{eqn: total cost bound log term 1} now yields 
\begin{align}
&\log\left(\left( \frac{\mathcal C_\ell}{\lambda c_3 \varphi(\triangle u_\ell)}\right)^{-1} \right)^{\frac{\mu_1}{\mu_2}} 
\leq
\log(K_2 \varepsilon^{-K_3})^{\frac{\mu_1}{\mu_2}}
\leq 
\left(\log(K_2)+K_3\right)^{\frac{\mu_1}{\mu_2}}\log(\varepsilon^{-1})^{\frac{\mu_1}{\mu_2}},\label{eqn: total cost bound log term  final}
\end{align}
where
\begin{align*}
K_2 &= K_1^{-1}\mathcal C_0^{-1}s^\beta 2^{1+\frac{1}{\mu_2}} c_2 c_3^{\frac{\mu_1+1}{\mu_2}}(\max_{i=1,2,3}\{\tilde d_i\})^{\frac{\mu_2+1}{\mu_2}}, \ \text{and}\\
K_3 & = \left(1+\max\{1,\frac{\gamma\mu_2-\beta}{\alpha(\mu_2+1)}\}\right)\frac{\mu_2+1}{\mu_2}.
\end{align*}
Incorporating the upper bounds \eqref{eqn: geom series bound final}, \eqref{eqn: cost per sample sum bound} and \eqref{eqn: total cost bound log term final} into the total cost \eqref{eqn:optimization_log_total_cost} and using expression \eqref{eqn: lambda definition} for $\lambda$, we finally get 
\begin{align*}
&\sum_{\ell = 0}^L \mathcal C_\ell \MMM_\ell \leq 
\left(\frac{c_3}{K_1}\right)^{\frac{1}{\mu_2+1}} (\log(K_2)+K_3)^{\frac{\mu_1}{\mu_2}} \log(\varepsilon^{-1})^{\frac{\mu_1}{\mu_2}}\lambda^{\frac{1}{\mu_2+1}}\sum_{\ell=0}^L (\mathcal C_\ell^{\mu_2}\varphi(\triangle u_\ell))^{\frac{1}{\mu_2+1}}+\sum_{\ell=0}^L\mathcal C_\ell\\
\leq & 
2^{\frac{1}{\mu_2}}K_1^{-\frac{1}{\mu_2+1}} (\log(K_2)+K_3)^{\frac{\mu_1}{\mu_2}} 
c_3^{ \frac{1}{\mu_2}} \left(\sum_{\ell=0}^L (\mathcal C_\ell^{\mu_2}\varphi(\triangle u_\ell))^{\frac{1}{\mu_2+1}}\right)^{\frac{\mu_2+1}{\mu_2}}\varepsilon^{-\frac{1}{\mu_2}}\log(\varepsilon^{-1})^{\frac{\mu_1}{\mu_2}}+\sum_{\ell=0}^L\mathcal C_\ell\\
\leq & 
\left\{ 
\begin{array}{ll}
d_1 \varepsilon^{-\frac{1}{\mu_2}-\frac{\gamma - \beta/\mu_2}{\alpha}} \log(\varepsilon^{-1})^{\frac{\mu_1}{\mu_2}}, & \text{if } \beta < \gamma \mu_2
\\
d_2 \varepsilon^{-\frac{1}{\mu_2}} \log(\varepsilon^{-1})^{1+\frac{\mu_1}{\mu_2}}, 
& \text{if } \beta = \gamma \mu_2
\\
d_3\varepsilon^{-\frac{1}{\mu_2}}, 
& \text{if } \beta > \gamma \mu_2
\end{array}
\right.,
\end{align*}
where
$ \displaystyle d_i  = 2^{\frac{1}{\mu_2}}K_1^{-\frac{1}{\mu_2+1}} (\log(K_2)+K_3)^{\frac{\mu_1}{\mu_2}} 
c_3^{ \frac{1}{\mu_2}}\tilde d_i^{\frac{\mu_2+1}{\mu_2}}+(2c_1h_0^\alpha)^{\frac{\gamma}{\alpha}}$ for $i=1,2,3$.

\end{proof}

\section{Numerical Examples}\label{section:numerical_examples}

This section discusses the numerical implementation of the multilevel sparse grid algorithm described in the previous sections. We apply both the multilevel Monte Carlo and sparse grid algorithms to estimate the spatially varying mean of the solution to the elliptic equation \eqref{eqn:elliptic_parametric_y} with a random diffusion coefficient on either the unit interval, i.e. $D = [0,1]$ or the unit square, i.e. $D = [0,1]^2$. For both these spatial domains, we choose the diffusion coefficient $q$ to be the univariate random field defined at $x_1\in [0,1]$ by
\[
 \log( a(x_1,\omega) - 0.5) = 1+ \left(\frac{\sqrt{\pi}L}{2}\right)^{\frac{1}{2}} Y_1(\omega) +\sum_{n=2}^\infty b_n(x_1) Y_n(\omega),
\]
where
\[
 b_n(x_1) := \left(\sqrt{\pi}L\right)^{\frac{1}{2}}\exp\left(\frac{-(\lfloor \frac{\pi}{2}\rfloor \pi L)^2}{8}\right)\left\{ 
\begin{array}{ll} 
\sin\left(\frac{\lfloor\frac{\pi}{2}\rfloor \pi x_1}{L}\right) & \text{ if  $n$ is even,} \\
\cos\left(\frac{\lfloor\frac{\pi}{2}\rfloor \pi x_1}{L}\right) & \text{ if  $n$ is odd},
\end{array}
\right. 
\]
and the random variables $\{Y_n\}_{n=1}^\infty$ are independent and uniformly distributed over the interval $[-\sqrt{3},\sqrt{3}]$. The parameter $L$ relates to the correlation length of the field $\log(q(x,\omega) - 0.5)$. Indeed it can be shown that the covariance function
\[
 \mathrm{Cov}[\log( a - 0.5)](x_1,x_1') = \exp\left(\frac{-(x_1-x_1')^2}{L^2}\right).
\]
For short correlation lengths, finite noise approximations of $q$ require a large number of terms to accurately represent its correlation structure, leading not only to a high stochastic dimension, but also to the presence of fine scale oscillations that can only be resolved with sufficiently fine meshes (see \cite{Charrier2011}). Here we do not consider the effect of this truncation error, and take $L=0.25$ and $N=5$. We also let the deterministic forcing term $f$ to be given by $f(x_1) = \cos(x_1)$ when $D = [0,1]$, and $f(x_1,x_2)=\cos(x_1)\sin(x_2)$, when $D = [0,1]^2$. The parameters $f$ and $q$ readily satisfy the smoothness conditions made in Assumptions \ref{ass:coefficient_regularity} and \ref{ass:coefficient_regularity_more}, justifying the use of sparse grids and were in fact used in \cite{Nobile2008} to show the competitive convergence rate of sparse grid methods vis-\`a-vis Monte Carlo sampling and stochastic finite elements.

\vspace{1em}

We solve each realization of the system using the finite element method with continuous piecewise polynomial basis functions and computational cost per solve was measured in CPU time. We obtained estimates for the spatial error through the spatial $L^2$ norms of the correction terms and for the sparse grid quadrature error by comparing successive sparse grid approximations $I_\MMM[v]$ in the spatial $L^2$ norm. Since the  convergence rates of sparse grid stochastic collocation methods depend on quantities that can not readily be computed \emph{a priori}, such as the radii $\tau_n$ of the regions of analyticity, they must be estimated during the execution of the program, unlike that of the Monte Carlo method ($\mu_2 = \frac{1}{2}$). We achieve and update this estimate by generating an initial sample on the coarsest level as well as after each spatial refinement step, before computing the optimal sample sizes. An overly conservative initial sample size will generate more sample paths than are necessary, especially when the sampling scheme has a fast convergence rate, while a sample size that is too small may lead to inaccurate diagnostic parameters, both of which have a detrimental effect on the efficiency of the algorithm. To mitigate this risk, we begin with a relatively large initial sample size on the coarsest level and reduce it gradually as our confidence in the estimated convergence rate improves.

\vspace{1em}

\begin{example}[1D]
Let $D = [0,1]$ with an initial mesh of uniform subintervals of length $h = 1/8$. We use a tolerance level $\varepsilon = 10^{-3}$ and refine the mesh by scaling $h$ at each step by the factor $s = 4$. Figure \ref{fig:example1_epsilon_cost} plots the $\varepsilon$-cost for single-  and multi-level versions of both Monte Carlo sampling and sparse grid stochastic collocation, based on different spatial refinement levels. As expected, the sparse grid stochastic collocation method is more efficient than Monte Carlo sampling and in both cases the multilevel algorithm achieves a considerable speed-up. For this example, four spatial mesh refinements are required to obtain a spatial error within tolerance (see Figure \ref{fig:example1_spatial_error_linear}).

\begin{figure}[H]
\centering
\includegraphics[width=0.6\textwidth]{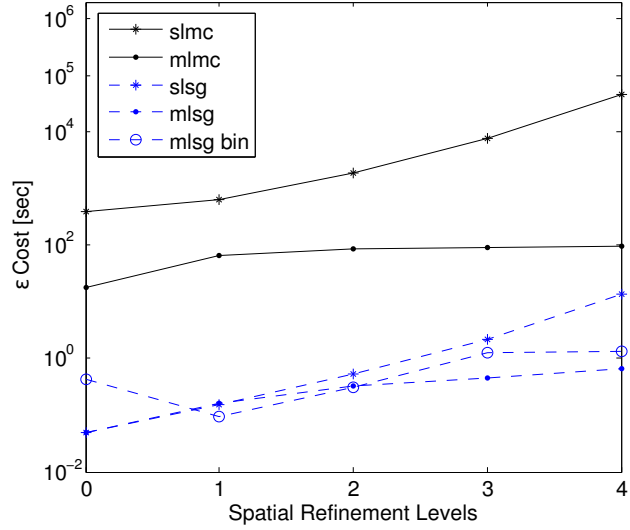}
\caption{The total $\varepsilon$-cost of the single- and multilevel Monte Carlo (slmc,mlmc) and sparse grid (slsg, mlsg, mlsg bin) methods. The dataset `mlsg' represents the computed optimal sample sizes, while `mlsg bin' refers to the binned sample sizes used to generate the actual multilevel estimate.}
\label{fig:example1_epsilon_cost}
\end{figure}

\begin{figure}[H]
\centering
\begin{subfigure}[b]{0.47\textwidth}
\includegraphics[width=\textwidth]{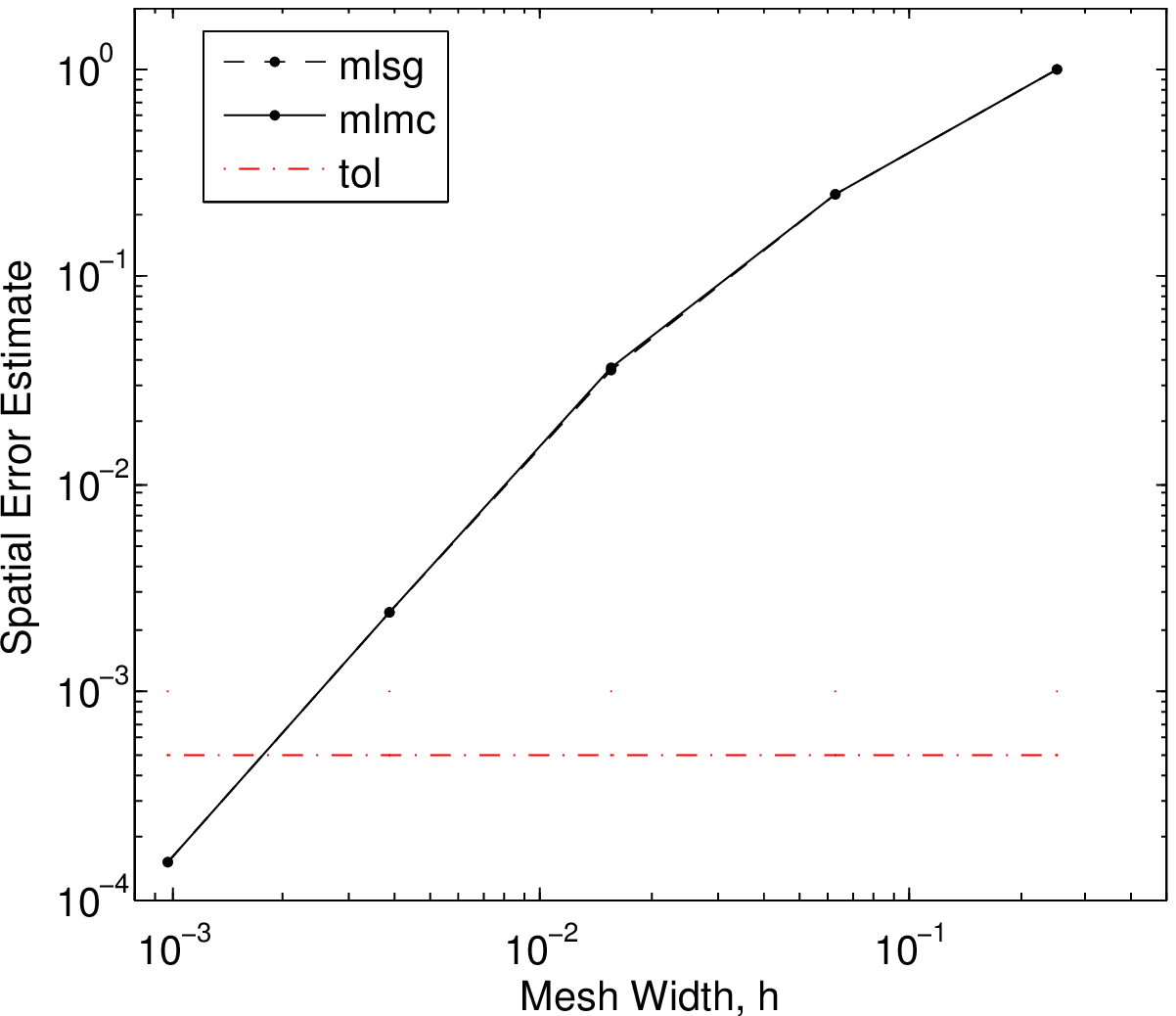}
\caption{Spatial error estimate ($\mathrm{tol} = \frac{\varepsilon}{2}$).}
\label{fig:example1_spatial_error_linear}
\end{subfigure}%
\quad%
\begin{subfigure}[b]{0.47\textwidth}
\includegraphics[width=\textwidth]{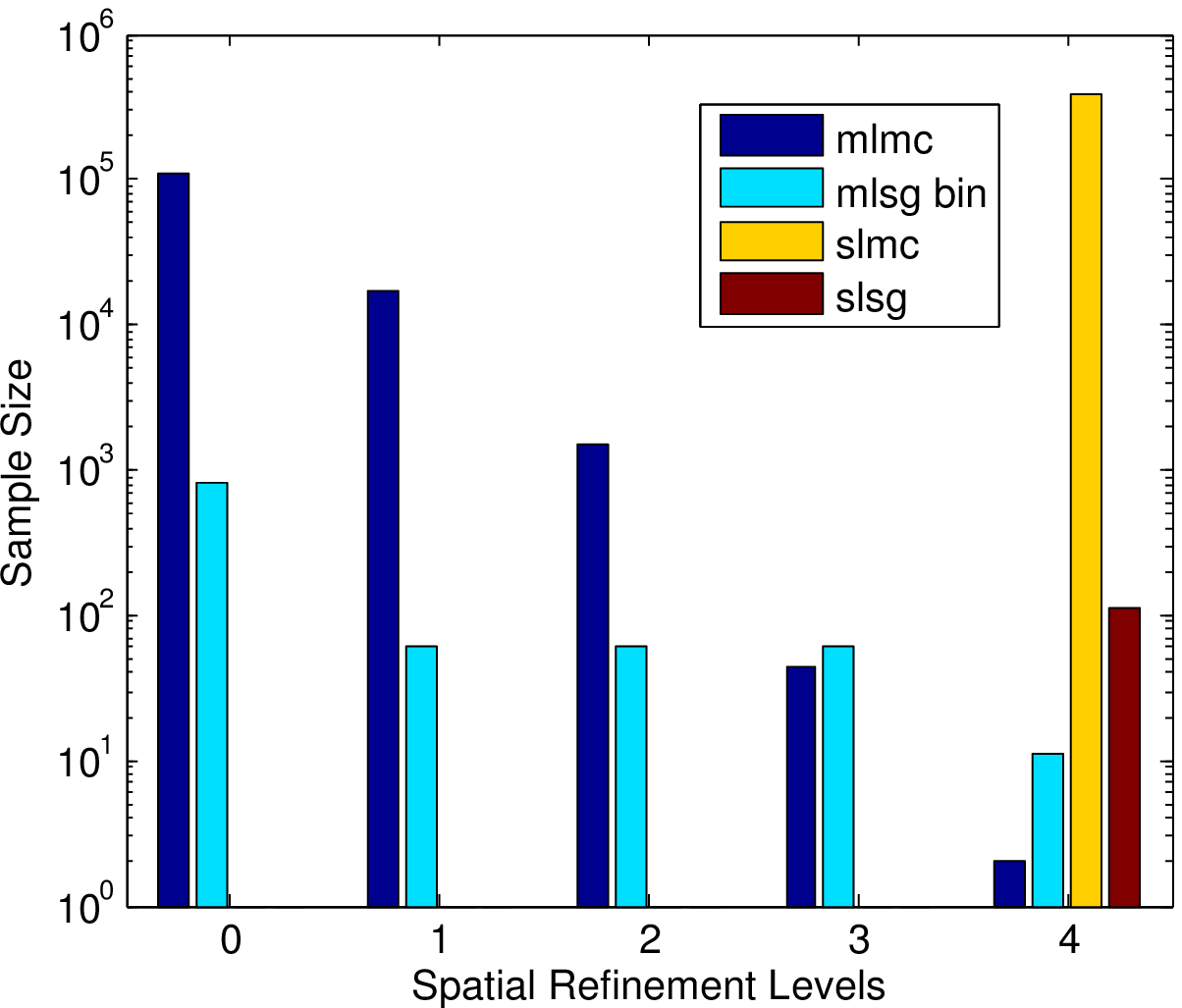}
\caption{Optimal sample sizes per level.}
\label{fig:example1_sample_sizes}
\end{subfigure}%
\caption{}\label{fig:example1_efficiency}
\end{figure}

From our analysis (Theorem \ref{thm:multilevel_efficiency}) it would seem that a faster spatial convergence rate, i.e. a higher value of $\alpha$ would improve the overall efficiency. Figure shows this to be the case for our example. Indeed not only are fewer refinement steps necessary for higher order polynomial approximation, but the computational effort also decreases.

\begin{figure}[H]
\centering
\begin{subfigure}[b]{0.47\textwidth}
\includegraphics[width=\textwidth]{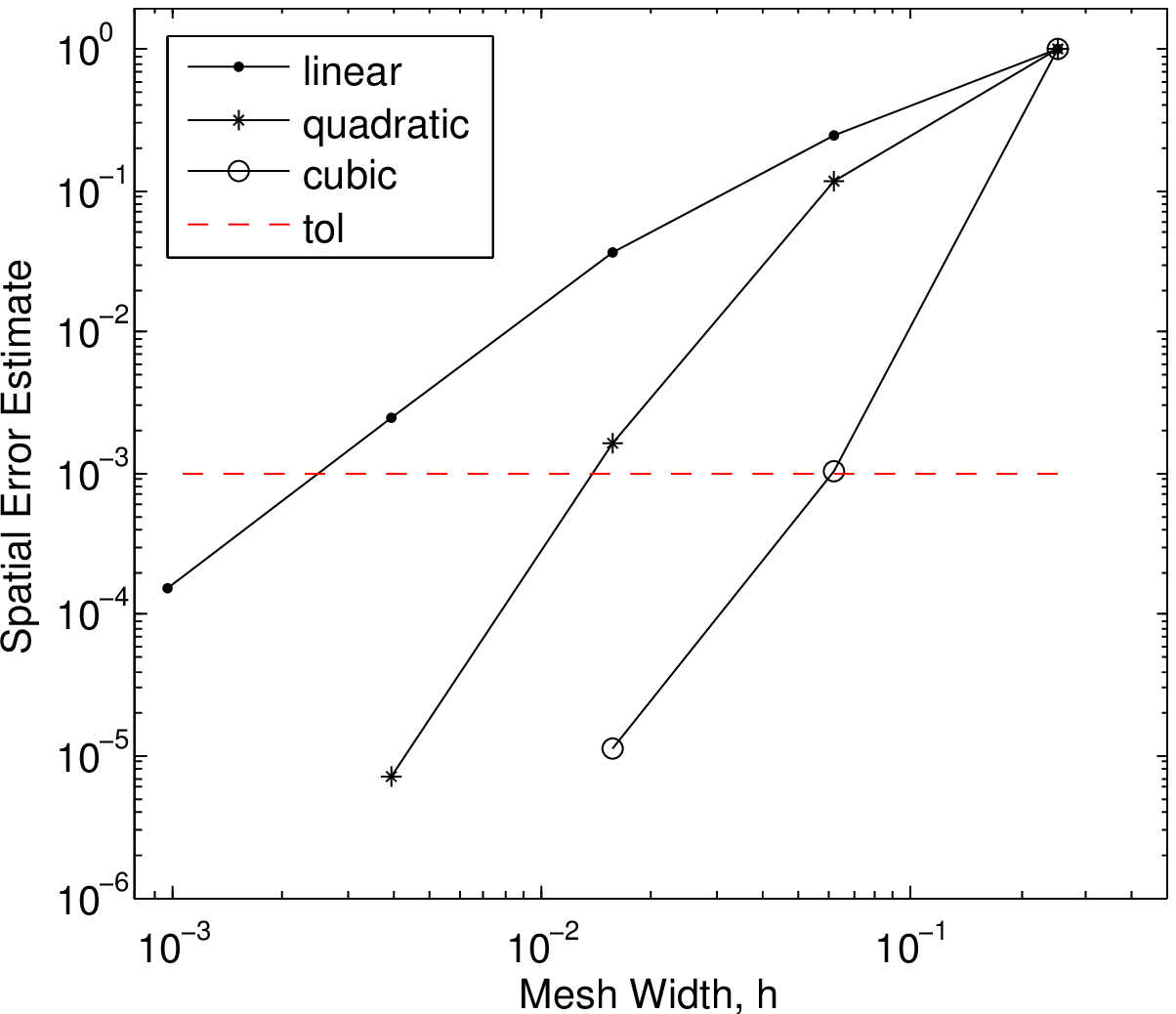}
\caption{Spatial error estimate for different order polynomial basis functions ($\mathrm{tol} = \frac{\varepsilon}{2}$).}
\label{fig:example1_spatial_error_prefine}
\end{subfigure}%
\quad%
\begin{subfigure}[b]{0.47\textwidth}
\includegraphics[width=\textwidth]{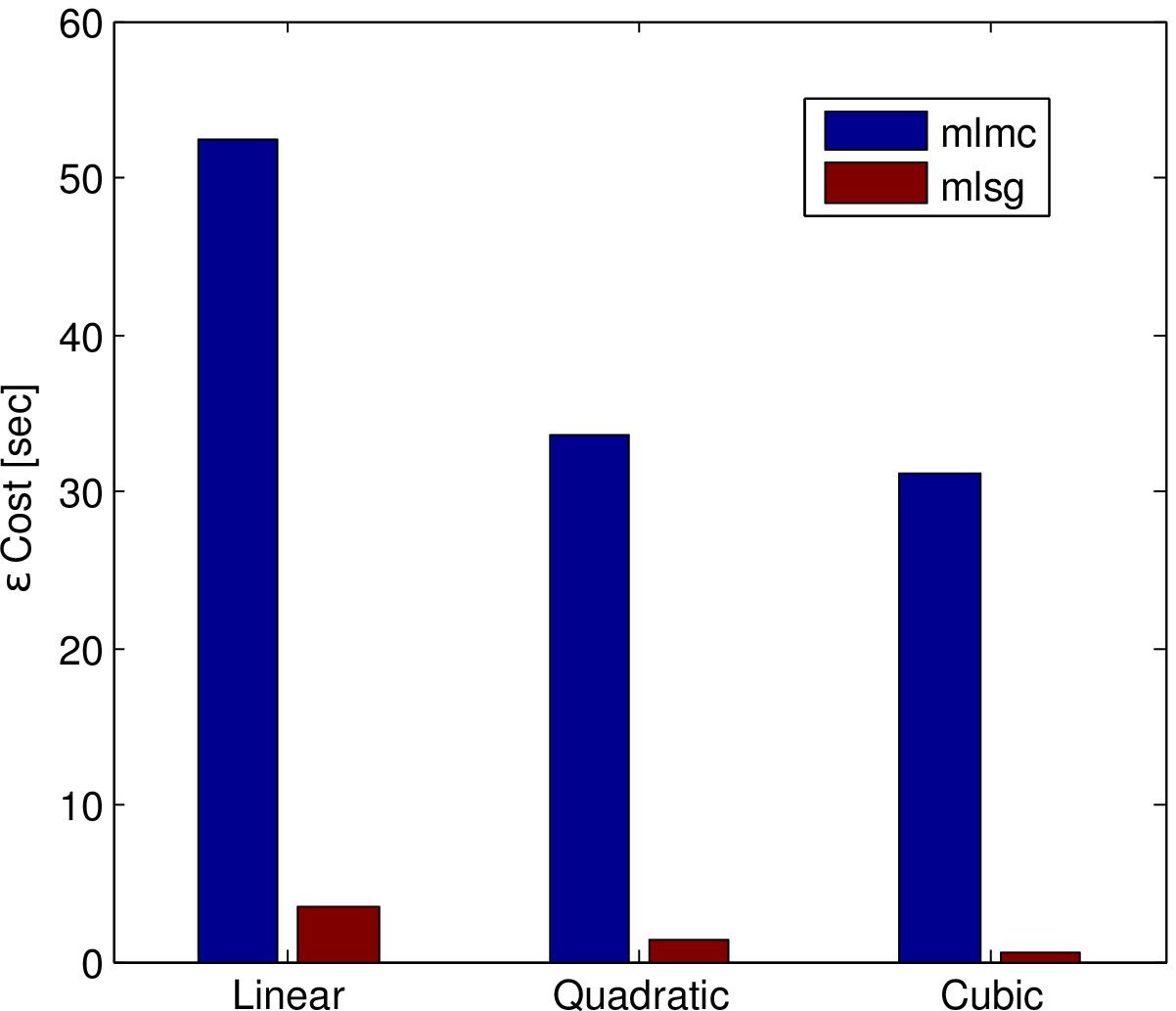}
\caption{The overall $\varepsilon$-cost of the multilevel algorithm, both for Monte Carlo sampling and sparse grid stochastic collocation.}
\label{fig:example1_epsilon_cost_smoothbasis}
\end{subfigure}%
\caption{The effect of using a higher order finite element method on the efficiency of the multilevel algorithm.}\label{fig:example1_smoother_basis}
\end{figure}
 
In order to investigate the effect of the refinement parameter $s$ and the number of spatial refinement steps needed on the algorithm's efficiency, we repeated Example 1 using linear basis functions, but with  different values of $s$, ranging from $s = 2,4,6,8,10$ to $s=160$. We computed the extreme value $s=160$, based on diagnostic information from previous examples by determining the mesh width $h$ for which the spatial error is within tolerance, so that with $s=160$ only one refinement step is necessary. We also used the previous, more accurate convergence rates to determine the optimal sample sizes. In other words, the case $s=160$ is unrealistic but was used to shed some light on the effect that the number of refinement steps has on the overall efficiency.

\begin{figure}[H]
\centering
\begin{subfigure}[b]{0.47\textwidth}
\includegraphics[width=\textwidth]{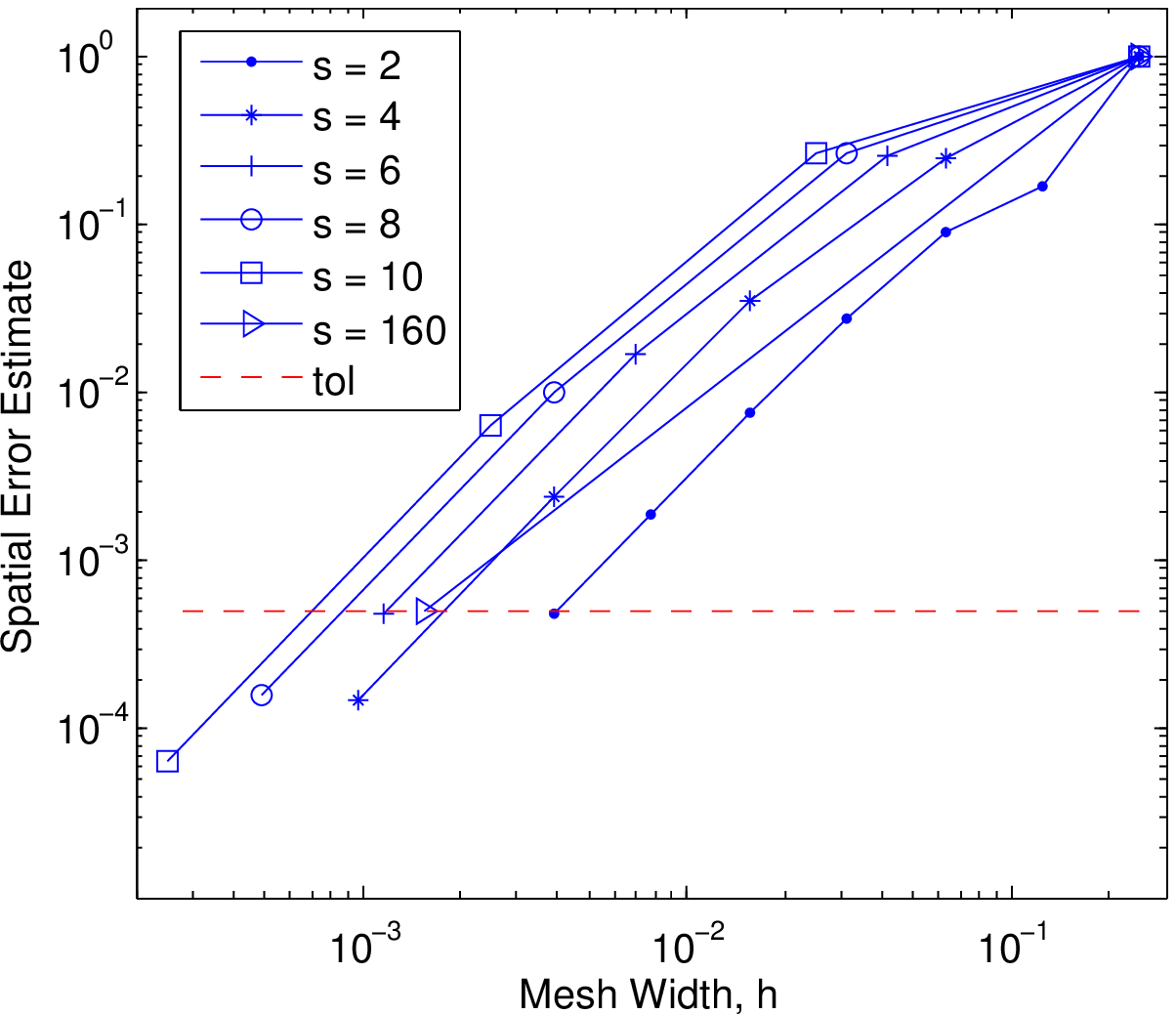}
\caption{Spatial error estimate for different values of the mesh refinement parameter $s$ ($\mathrm{tol} = \frac{\varepsilon}{2}$).}
\label{fig:example1_spatial_error_s_refine}
\end{subfigure}%
\quad%
\begin{subfigure}[b]{0.47\textwidth}
\includegraphics[width=\textwidth]{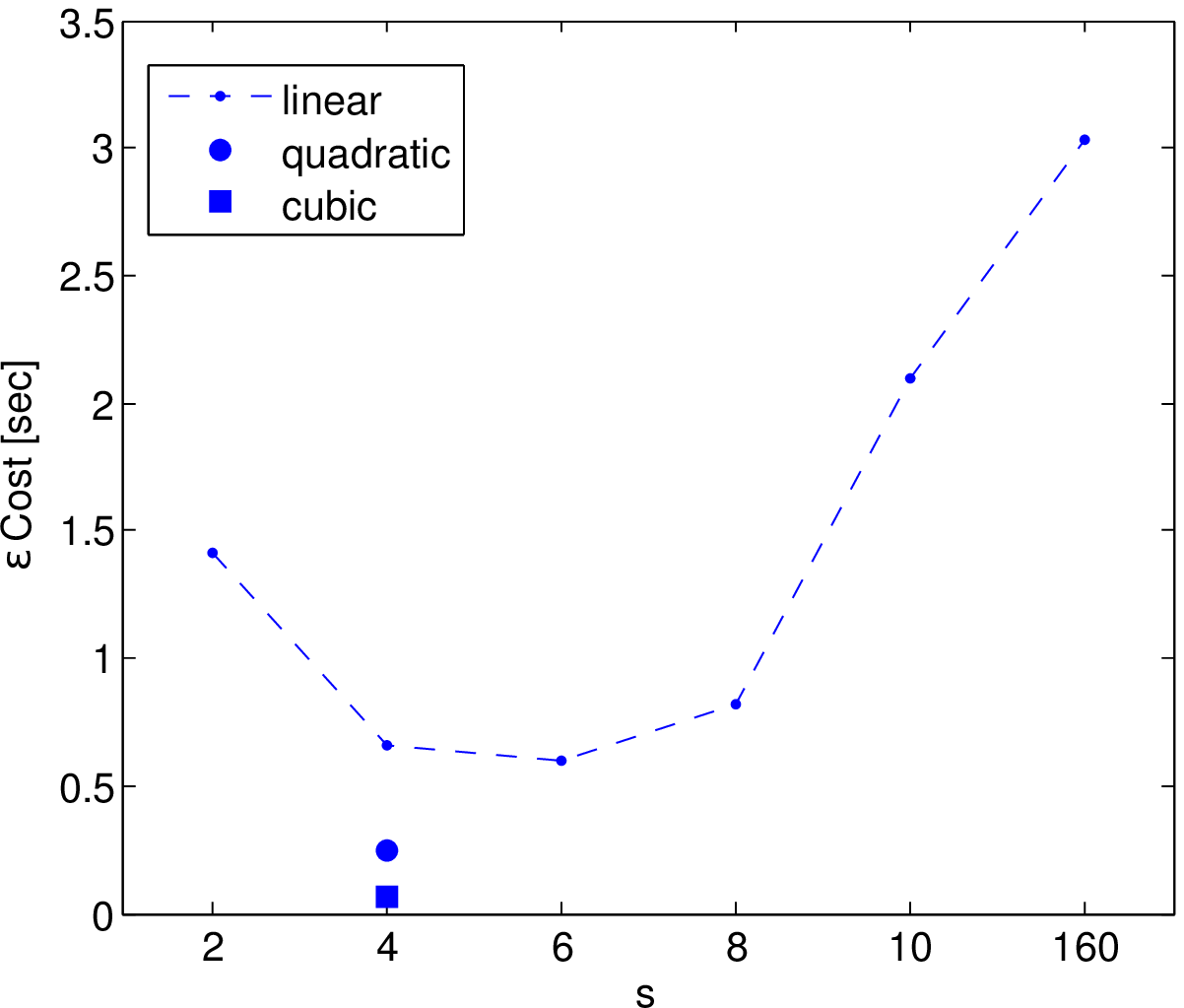}
\caption{The overall $\varepsilon$-cost of the multilevel algorithm for different values of the mesh refinement parameter $s$.}
\label{fig:example1_epsilon_cost_s_refine}
\end{subfigure}%
\caption{The effect of spatial mesh refinement on the efficiency of the multilevel algorithm.}\label{fig:example1_s_refine}
\end{figure}

The results, as summarized in Figure \ref{fig:example1_s_refine}, are not conclusive. It seems (see Figure \ref{fig:example1_epsilon_cost_s_refine}) that there is an optimal value for $s$, in this case $s=6$, for which the computational effort is minimal. More moderate refinement strategies may lead to a needlessly many levels and hence too many unnecessary samples, while those that are overly aggressive might overshoot the  mesh size $h$ required by the tolerance level (see Figure \ref{fig:example1_spatial_error_s_refine}), thereby incurring a needlessly high cost. These, however cannot be the only determinants of efficiency, since the value $s=160$, giving precisely the right $h$, would then be expected to outperform the others. In other words, the number of spatial refinement models also seems to have an influence on the overall efficiency of the algorithm. More work is needed to untangle the effect of the mesh refinement strategy on the $\varepsilon$-cost of the algorithm.

\end{example}

\begin{example}[2D]
Consider the spatial domain $D = [0,1]^2$ subdivided by uniform triangulation with mesh width $h=0.25$. Here we use the same tolerance level as before, i.e. $\varepsilon = 10^{-3}$ and refine the mesh at each step by dyadic subdivision, i.e. $s = 2$. The results are comparable to those in Example 1. The sparse grid method outperforms the Monte Carlo sampling scheme in both the single- and multilevel cases, although the multilevel Monte Carlo method is more efficient than the single level sparse grid method in this case. The degrees of freedom of the sample deterministic systems ranged from $64$ to $16641$ and in fact the maximal number of refinement steps were reached before the spatial error estimate was within tolerance. At such high refinement levels, it is not only the deterministic system solve, but also the assembly and interpolation operations that contribute significantly to the overhead. On the other hand, there is a wealth of information available from samples already generated, which could potentially be incorporated into the assembly and solution of a given system realization, thus providing a much needed speed-up.

\begin{figure}[H]
\centering
\begin{subfigure}[b]{0.47\textwidth}
\includegraphics[width=\textwidth]{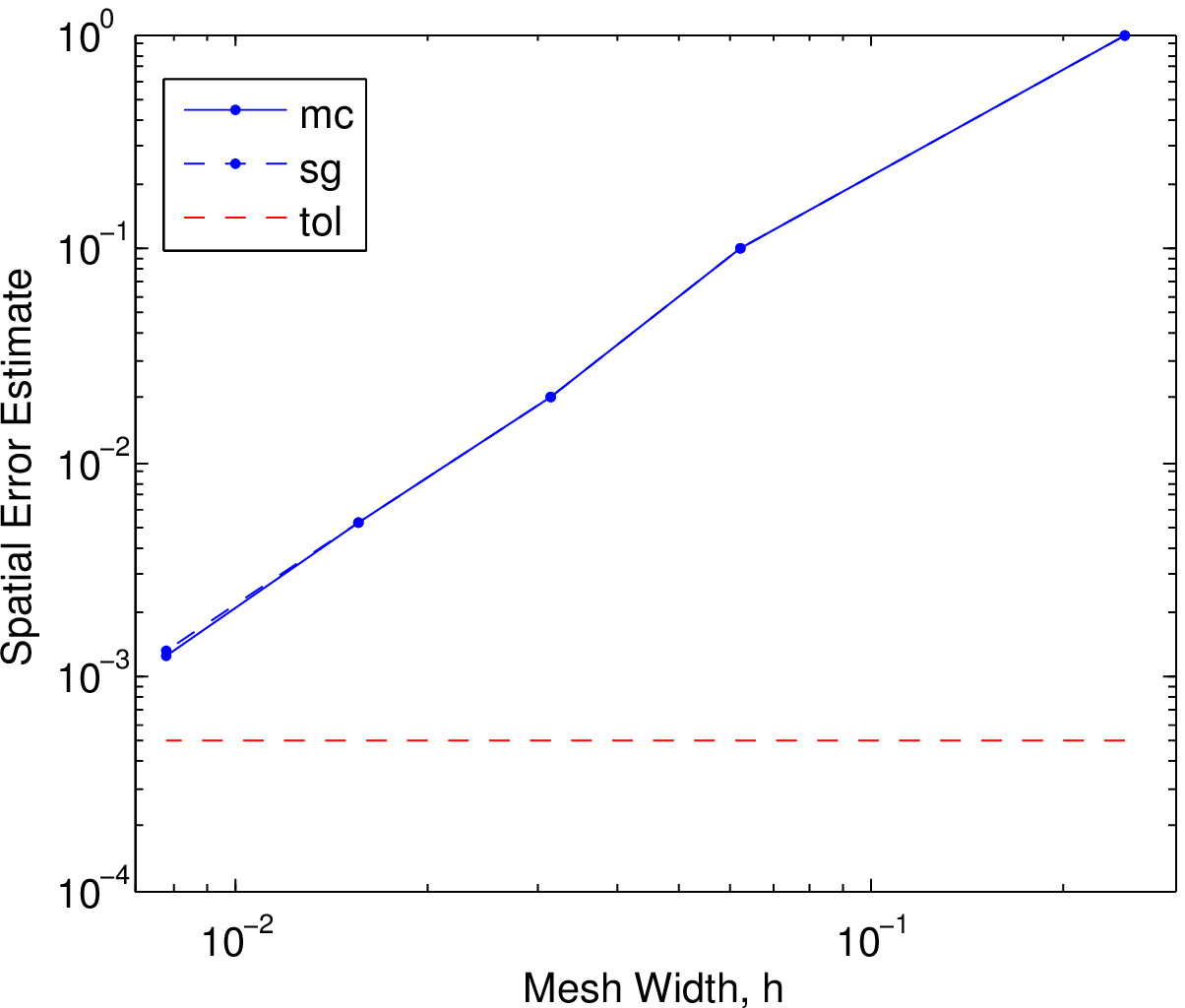}
\caption{Spatial error estimate for Monte Carlo sampling and sparse grid stochastic collocation ($\mathrm{tol} = \frac{\varepsilon}{2}$).}
\label{fig:example2_spatial_error}
\end{subfigure}%
\quad%
\begin{subfigure}[b]{0.47\textwidth}
\includegraphics[width=\textwidth]{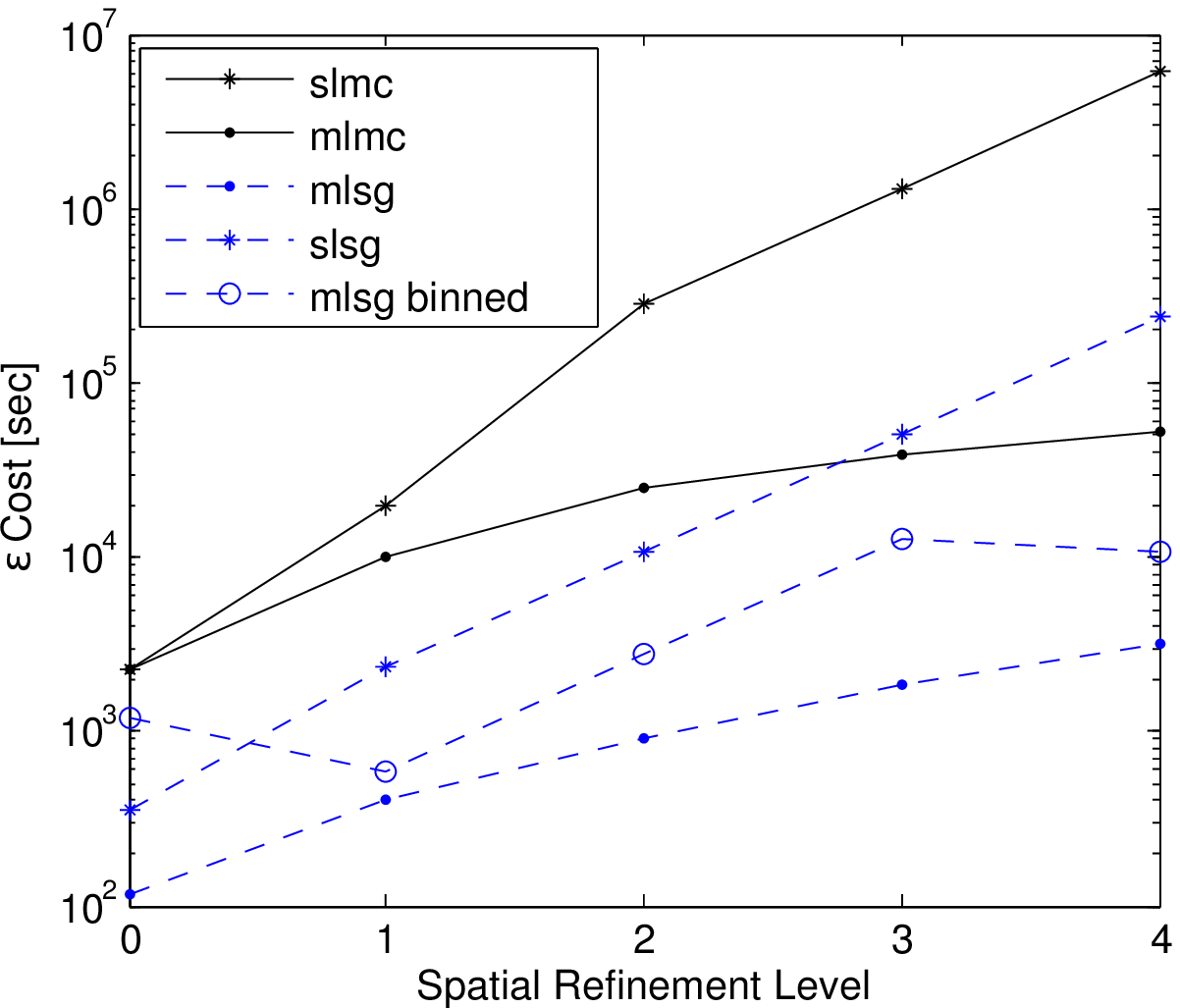}
\caption{he total $\varepsilon$-cost of the single- and multilevel Monte Carlo (slmc,mlmc) and sparse grid (slsg, mlsg, mlsg bin) methods.}
\label{fig:example2_epsilon_cost}
\end{subfigure}%
\caption{The multilevel Monte Carlo- and sparse grid algorithms for a 2D spatial problem.}\label{fig:example2_overall}
\end{figure}
 
\end{example}

\section{Discussion}\label{section:conclusion}

Multilevel sampling methods offer an improvement on the efficiency of single level methods without loosing any of their salient features, such as parallel implementation, nestedness, or non-intrusiveness. In this paper we have shown that the multilevel Monte Carlo algorithm developed in \cite{Cliffe2011} can readily be extended to interpolation-based sampling schemes (such as sparse grid stochastic collocation) leading to an even greater efficiency in certain cases. Despite the technical difficulties in proving that the multilevel algorithm improves the computational complexity, this method is surprisingly straightforward to implement if the errors and convergence rates are estimated numerically. This supports the claim that the multilevel algorithm can be used as a wrapper, coordinating the spatial refinement with the quadrature level. An area of future work would be to investigate this claim in the case of adaptive sampling schemes. Furthermore, it is not yet entirely clear how the spatial refinement strategy effects the overall performance of the algorithm, although it was seen in to have a considerable influence. Apart from improving efficiency, multilevel methods strategically record useful information that can be harnessed to further improve computation.

\bibliographystyle{elsarticle-num} 
\bibliography{multilevel_bib}

\end{document}